\numberwithin{equation}{section}
\DeclareMathOperator{\im}{Im}
\newcommand{\Cl}{\mathbb{C}}
\newcommand{\Rl}{\mathbb{R}}
\newcommand{\Nl}{\mathbb{N}}
\newtheorem{theorem}{Theorem}[section]
\newtheorem{proposition}[theorem]{Proposition}
\newtheorem{lemma}[theorem]{Lemma}
\newtheorem{definition}[theorem]{Definition}
\newcommand{\Om}{\Omega}
\newcommand{\te}{\theta}
\newcommand{\la}{\lambda}
\newcommand{\La}{\Lambda}
\newcommand{\PG}{\mathcal{P}}
\newcommand{\Disc}{\mathbb{D}}
\newcommand{\Hol}{{\rm Hol}}
\newcommand{\arctanh}{\text{arctanh}}
\newcommand{\SO}{\text{SO}}
\newcommand{\Sp}{\text{Sp}\,}
\newcommand{\dom}{\mathrm{dom}\,}
\newcommand{\re}{\mathrm{Re}}
\newcommand{\Strip}{\mathbb{S}}
\newcommand{\Dc}{\mathcal{D}}
\newcommand{\B}{\mathcal{B}}
\newcommand{\NN}{\mathcal{N}}
\newcommand{\Hil}{\mathcal{H}}
\newcommand{\DD}{\mathcal{D}}
\newcommand{\bno}[1]{|\!|\!|#1|\!|\!|}
\newcommand\C{\mathbb C}
\newcommand\R{\mathbb R}
\newcommand\T{\mathbb T}
\newcommand\D{\mathbb D}
\newcommand\eps{\varepsilon}
\newcommand\ind{{\rm 1\kern-.30em I}}
\let\phi=\varphi
\title{Approximation numbers of\\ weighted  composition operators}
\author{G.~Lechner\thanks{Cardiff University, School of Mathematics, LechnerG@cardiff.ac.uk},
\;\,
D.~Li\thanks{Universit\'e d'Artois, Laboratoire de Math\'ematiques de Lens (LML), daniel.li@euler.univ-artois.fr}, 
\;\,
H.~Queff\'elec\thanks{Universit\'e Lille Nord de France, Herve.Queffelec@univ-lille1.fr},
\;\,
L.~Rodr\'iguez-Piazza\thanks{Universidad de Sevilla, piazza@us.es}}
\date{December 24, 2017}
\begin{document}

\maketitle

\begin{abstract}
     We study the approximation numbers of weighted composition operators $f\mapsto w\cdot(f\circ\varphi)$ on the Hardy space $H^2$ on the unit disc. For general classes of such operators, upper and lower bounds on their approximation numbers are derived. For the special class of weighted lens map composition operators with specific weights, we show how much the weight $w$ can improve the decay rate of the approximation numbers, and give sharp upper and lower bounds. These examples are motivated from applications to the analysis of relative commutants of special inclusions of von Neumann algebras appearing in quantum field theory (Borchers triples).
\end{abstract}

\vspace{5mm}

\section{Introduction}

In the study of composition operators $C_\varphi:f\mapsto f\circ\varphi$ acting on a Hilbert space $H$ of analytic functions (on the unit disk $\D$), one is typically interested in understanding how function-theoretic properties of $\varphi$ are related to operator-theoretic properties of $C_\varphi$. Basic properties such as boundedness or compactness of $C_\varphi$ are by now well characterized in terms of $\varphi$ in many cases \cite{Shapiro:1993,CowenMacCluer:1994}.  More recently, also the membership of $C_\varphi$ in various smaller ideals~$I$ of bounded operators on $H$ (such as the $p$-Schatten class), and more precisely the behavior of the approximation numbers $a_n(C_\varphi)$ of $C_\varphi$, was studied in depth in several papers (see e.g.  \cite{LiQueffelecRodriguez-Piazza:2012,LefevreLiQueffelecRodriguez-Piazza:2012,LiQueffelecRodriguez-Piazza:2012_3,LQR}). 

If  $\mathcal{M}(H)$ denotes the space of  multipliers of $H$ (those $w\in H$ such that $wf\in H$ for each $f\in H$), we can twist  a composition operator $C_\varphi$, assumed to map $H$ to itself,  by composing it on the left with the operator $M_w$ of multiplication by  $w\in \mathcal{M}(H)$. We then get a so-called {\em weighted composition operator} $T=M_w\,C_\varphi$ (see e.g. \cite{LEF} or \cite{CGP,HLNS}).

A careful distinction must be made between the multipliers of $H$, denoted $\mathcal{M}(H)$, and those of $C_{\varphi}(H)$,  denoted  $\mathcal{M}(H,\varphi)$, namely those functions  $w\in H$ such that $wf\in H$ for each $f$ belonging to  the range $C_{\varphi}(H)$, not necessarily to the whole of $H$. For example, if $H=H^2$ is the Hardy space, then $\mathcal{M}(H)=H^\infty$ consists of all bounded analytic functions on $\D$. It can be proved that (see \cite{AT,COHE} and \cite{GKP}, respectively):
\begin{align*}
	\mathcal{M}(H^2,\varphi)
	&=
	H^\infty\Leftrightarrow \varphi \hbox{\ is a finite Blaschke product},
	\\
	\mathcal{M}(H^2,\varphi)
	&=
	H^2\,\,\Leftrightarrow \Vert\varphi\Vert_\infty<1.
\end{align*}

In this paper, we study approximation numbers of weighted composition operators in the case of the Hardy space $H=H^2$ of the disc, and a weight $w  \in\mathcal{M}(H)\subset \mathcal{M}(H,\varphi)$. Then, since we are dealing with ideals, twisting with the bounded operator $M_w$ can but reinforce the membership in $I$, and improve the rate of decay of approximation numbers. 

\bigskip

There are (at least) three motivations for considering weighted composition operators: First, they form a natural and non-trivial generalization of composition operators on $H^2(\Disc)$. In this context, it is natural to ask how much faster the approximation numbers $a_n(M_wC_\varphi)$ can decay in comparison to the $a_n(C_\varphi)$. For example, can $M_wC_\varphi$ be compact when $C_\varphi$ is non-compact, or can the $a_n(M_wC_\varphi)$ decay quite fast when the $a_n(C_\varphi)$ decay rather slowly? We will address these questions in the body of the text\footnote{Another application of weighted composition operators to the study of composition operators on spaces of several complex variables can be found in \cite{LIQR}.}.

As a second motivation, suppose $C^G_\varphi$ is a composition operator on a Hardy space $H^2(G)$ over a simply connected region properly contained in $\Cl$. Then a choice of Riemann map $\tau:\Disc\to G$ induces a unitary between $H^2(\Disc)$ and $H^2(G)$ \cite{Duren:1970}, and we can equivalently formulate $C^G_\varphi$ as an operator on $H^2(\Disc)$. This operator on $H^2(\Disc)$, however, turns out to be a {\em weighted} composition operator $M_{w_\tau}C^\Disc_{\varphi_\tau}$ in general (see \cite{ShapiroSmith:2003} and Section~\ref{Section:modular}). Thus composition operators on domains other than $\Disc$ automatically produce weighted composition operators on $H^2(\Disc)$.

A third motivation for studying weighted composition operators comes from applications in a completely different field, namely inclusions of von Neumann algebras, used in mathematical physics to model quantum field theories \cite{Haag:1996}. For $\NN$ a von Neumann algebra with a cyclic and separating vector $\Om$ on a Hilbert space $\Hil$, we will consider the Hilbert space $\DD$ obtained by closing the domain of the modular operator of $(\NN,\Om)$ \cite{Takesaki:2003} in its graph norm. If $\NN$ carries additional structure (a Borchers triple), this setting is related to complex analysis because an irreducible component of $\DD$ can be naturally identified with a Hardy space $H^2(\Strip)$ on a strip region $\Strip\subset\Cl$, bounded by two lines parallel to $\Rl$ (see Section~\ref{Section:modular}).

In applications in mathematical physics, one is interested in specific inclusions $\tilde\NN\subset\NN$ and their relative commutants, the size of which can be controlled if a map built from the modular operator has sufficiently quickly decaying approximation numbers \cite{BuchholzDAntoniLongo:1990-1}. On the level of the irreducible component giving rise to the Hardy space $H^2(\Strip)$, this condition translates to a weighted restriction operator $R_w:H^2(\Strip)\to L^2(\Rl)$, $f\mapsto (w\cdot f)|_\Rl$, where the real line~$\Rl$ lies in the interior of the strip, and $w\in H^\infty(\Strip)$ is an inner function on $\Strip$ obtained from the inclusion $\tilde\NN\subset\NN$. For the application in physics, sharp upper bounds on the approximation numbers of $R_w$ are desirable \cite{AlazzawiLechner:2016}.

\smallskip

Mapping the strip $\Strip$ to the disc, $R_w$ can be formulated as a Carleson embedding operator (the definition of which we recall in Section~\ref{section:preliminaries}). These operators are often used in estimating approximation numbers of composition operators \cite{LefevreLiQueffelecRodriguez-Piazza:2012,LiQueffelecRodriguez-Piazza:2012_3}. In turn, we find from the strip picture that the embedding operator can be estimated from above by special weighted composition operators on the disc, namely those whose symbol is a lens map $\varphi=\varphi_{\la}$, $0<\la<1$ (see Section~\ref{Section:lensmaps} for the definition). This closes the connection to composition operators on $H^2(\Disc)$, where the $C_{\varphi_\la}$ are among the best studied examples \cite{LefevreLiQueffelecRodriguez-Piazza:2012}.

\bigskip

Given these motivations, this article is organized as follows. In Section~\ref{Section:general}, we introduce our notation and setup, and study weighted composition operators with general symbols $\varphi$ and weights $w$ on the disc. After deriving a simple upper bound, we give an example how a weight can turn a non-compact composition operator into a compact one. Regarding lower bounds, we show that the worst possible behavior of the $a_n(C_\varphi)$ (exponential if $\Vert \varphi\Vert_\infty<1$, subexponential if $\Vert \varphi\Vert_\infty=1$) is the same for the weighted operators $M_wC_\varphi$.

\medskip

In Section~\ref{Section:modular}, we explain the links between modular theory of von Neumann algebras, Hardy spaces on strips, and weighted restriction operators. In that section we also show how weighted lens map composition operators appear. Section~\ref{Section:modular} can be read independently of the other parts of the article.

\medskip

Finally, we consider in Section~\ref{Section:lensmaps} the specific case of weighted lens map composition operators $M_wC_{\varphi_\la}$ as our primary example. In the case without weight, the approximation numbers of $C_{\varphi_\la}$ are known to decay like $e^{-c\sqrt n}$~\cite{LiQueffelecRodriguez-Piazza:2012_3}. A natural question in this context is how close the decay rate of the $a_n(M_wC_{\varphi_\la})$ can come to exponential decay $e^{-cn}$ (the optimal one in the context of composition operators). For the weights motivated by the considerations in Section~\ref{Section:modular}, we show that $a_n(M_wC_{\varphi_\la})$ decays like $e^{-c\frac{n}{\log n}}$.

\section{Weighted composition operators on $\Disc$}\label{Section:general}

\subsection{Preliminaries}\label{section:preliminaries}

We begin by recalling a few operator-theoretic and function-theoretic  facts. The approximation numbers  $a_{n}(T)=a_n$ of an operator $T:H\to H$ (with $H$ a Hilbert space) are defined by 
$$a_n=\inf_{{\rm rank} R<n} \Vert T-R\Vert\,,$$
and $T$ is compact if and only if $\lim_{n\to \infty}a_{n}(T)=0$. 
According to a result of Allahverdiev \cite[p.~155]{CAST}, $a_n=s_n$, the $n$-th singular number of $T$. 
We have the following alternative definition (a variant of Kolmogorov numbers) of $a_{n}(T)$ \cite{LQR}, in which $B_H$ denotes the closed unit ball of $H$ and $d(g,A)$ the distance of $g$ to  $A\subset H$: 
\begin{equation}\label{alter} a_{n}(T)=\inf_{\dim E<n}\Big[\sup_{f\in B_H} d(Tf,\ TE)\Big].\end{equation}
The definition of $a_{n}(T)$ also makes sense for $T:X\to Y$ an operator between Banach spaces (see Theorem\ \ref{extension}  to come).\\
Coming back to the hilbertian setting, two other useful alternative definitions (respectively in terms of \textit{Bernstein} and \textit{Gelfand} numbers) are, denoting by $S_E$ the unit sphere of a subspace $E$ of $H$ (see \cite[Chapter 2]{CAST}, or \cite{LiQueffelecRodriguez-Piazza:2012}):
\begin{align}
	\label{serge} 
	a_{n}(T)
	&=
	\sup_{\dim E=n}\Big[\inf_{f\in S_E} \Vert Tf\Vert\Big]\,,
	\\
	\label{gelf} 
	a_{n}(T)
	&=
	\inf_{{\rm codim} E <n} \Vert T_{|E}\Vert=\inf_{{\rm codim} E <n}\Big[\sup_{f\in S_E} \Vert Tf\Vert\Big]\,.
\end{align}

The following parameters $0\leq \beta^{-}(T) \leq \beta^{+}(T)\leq 1$ were used in \cite{LQR}: 
$$\beta^{+}(T)=\limsup_{n\to \infty}\big[a_{n}(T)\big]^{1/n},\qquad \beta^{-}(T)=\liminf_{n\to \infty}\big[a_{n}(T)\big]^{1/n}.$$
When the limit exists, we denote it by $\beta(T)$. It is proved in \cite{LQR} that this is the case for $T$ a composition operator on the Hardy, Bergman, or Dirichlet space. Observe that $\beta^{-}(T)=1$ signifies a subexponential decay for $a_{n}(T)$, namely  $a_{n}(T)\geq e^{-n\eps_n}$ where $\eps_n>0$ and $\eps_n\to 0$.

Let  now $\D$ be the open unit disk of the complex plane and $H^2$ the usual Hardy space of $\D$. Recall \cite[p.~12]{Shapiro:1993} that the norm of $f(z)=\sum_{n=0}^\infty f_n z^n\in H^2$ is defined by $\Vert f\Vert_{2}^{2}=\sum_{n=0}^\infty |f_n|^2$, or alternatively by 
\begin{equation}\label{alt} \Vert f\Vert_{2}^{2}=\int_{\T} |f^{\ast}(u)|^2 dm(u)\end{equation}
where  $m$ denotes the Haar measure of the unit circle $\T$ and  $f^{\ast}(u)$ is the ($m$-almost everywhere existing by Fatou's theorem) radial limit $\lim_{r\to 1^{-}} f(ru)$, often again denoted $f(u)$. \\
The space of multipliers of $H^2$ is isometrically isomorphic to the space $H^\infty$ of functions analytic and bounded on $\D$ \cite{Garnett:2007}. This means that any function $w\in H^\infty$ defines a bounded multiplication operator $M_w:H^2\to H^2$ by the formula $M_{w}(f)=wf$ and that 
$$\Vert M_w\Vert:=\sup_{\Vert f\Vert_2\leq 1} \Vert wf\Vert_2=\Vert w\Vert_\infty:=\sup_{z\in \D} |w(z)|.$$  If $\varphi$ is a \textit{non-constant} and analytic self-map of $\D$ (often called a symbol), the associated composition operator $C_\varphi:H^2\to H^2$ is defined by 
$$C_{\varphi}(f)=f\circ \varphi.$$
The fact that $C_\varphi$ boundedly maps $H^2$ to itself for any symbol $\varphi$ is the well-known subordination principle of Littlewood (\cite[p.~29]{CowenMacCluer:1994}, \cite[p.~16]{Shapiro:1993}).\\

Next, a positive and bounded measure $\mu$ on $\D$ is called a Carleson measure (for $H^2$) if the identity map $R_\mu, R_{\mu}(f)=f$ maps  $H=H^2$ to $L^{2}(\mu)$, that is if there exists a constant $C$ such that: 
$$\int_{\D} |f(z)|^2 d\mu(z)\leq C\Vert f\Vert_{2}^2\quad  \forall f\in H^2.$$
The best constant $C$ is called the Carleson-norm of $\mu$ and is denoted $\Vert \mu\Vert_{\mathcal{C}}$. That is $\Vert \mu\Vert_{\mathcal{C}}=\Vert R_\mu\Vert^2$. Let us set 
$$\rho_{\mu}(h):=\sup_{\xi\in \T} \mu[S(\xi,h)]$$ where, for $\xi\in \T=\partial\Disc$, $S(\xi, h)$ is the Carleson box 
$$S(\xi,h)=\{z\in \D : |z-\xi|\leq h\}.$$
With those notations, the Carleson embedding theorem \cite[p.~37]{CowenMacCluer:1994} gives a geometric characterization of Carleson measures:

\begin{theorem} \label{thm:carleson}
	Let $\mu$ be a positive and bounded measure  on $\D$. Then, $\mu$ is a Carleson measure if and only if, for some constant $K$, 
	$$\rho_{\mu}(h)\leq Kh\quad \forall h\in ]0,1].$$
	In this case, $\Vert \mu\Vert_{\mathcal{C}}\leq aK$ where $a>0$ is an absolute constant.
\end{theorem}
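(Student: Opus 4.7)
The plan is to prove the two implications separately with explicit quantitative bounds.

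\emph{Necessity.} For the easy direction (any Carleson measure satisfies $\rho_\mu(h)\leq Kh$), I would test the Carleson inequality on normalized reproducing kernels concentrated near a boundary point. Fix $\xi\in\T$ and $h\in(0,1]$ and set $a=(1-h)\xi\in\D$. The function
$$f_a(z)=\frac{\sqrt{1-|a|^2}}{1-\overline{a}z}$$
has $\Vert f_a\Vert_2=1$. Since $\overline{a}\xi=1-h$, the triangle inequality yields $|1-\overline{a}z|\leq h+|z-\xi|\leq 2h$ for $z\in S(\xi,h)$, and $1-|a|^2=2h-h^2\geq h$, so $|f_a(z)|^2\geq 1/(4h)$ on the box. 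Integrating $|f_a|^2$ against $\mu$ gives $\mu(S(\xi,h))\leq 4h\,\Vert\mu\Vert_{\mathcal{C}}$, so one can take $K=4\Vert\mu\Vert_{\mathcal{C}}$.

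\emph{Sufficiency.} For the harder direction I would combine a layer-cake decomposition with a geometric covering by Carleson boxes and the non-tangential maximal theorem. Given $f\in H^2$ and $t>0$, set $E_t=\{z\in\D:|f(z)|>t\}$ and $F_t=\{u\in\T:Nf(u)>t\}$, where $N$ denotes the non-tangential maximal function. A point $z\in E_t$ satisfies $|f(z)|>t$, so every $u\in\T$ in the shadow of $z$ (the boundary arc seen from $z$ through a fixed non-tangential cone) lies in $F_t$. A Whitney-type decomposition of the open set $F_t\subset\T$ into essentially disjoint arcs $I_j$ of length $h_j$ then produces a family of Carleson boxes $S(\xi_j,Ch_j)$ covering $E_t$ with $\sum_j h_j\leq C'\,m(F_t)$. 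The hypothesis $\rho_\mu(h)\leq Kh$ gives
$$\mu(E_t)\leq \sum_j \mu\bigl(S(\xi_j,Ch_j)\bigr)\leq C'' K\, m(F_t).$$
Writing $\int_\D|f|^2\,d\mu=\int_0^\infty \mu(\{|f|^2>t\})\,dt$ and substituting $t=s^2$ produces the estimate $\int_\D|f|^2\,d\mu\leq 2C''K\int_0^\infty s\, m(\{Nf>s\})\,ds=C''K\Vert Nf\Vert_2^2$. The Hardy--Littlewood maximal theorem (applied via the Poisson integral representation of $H^2$) then yields $\Vert Nf\Vert_2\leq C'''\Vert f\Vert_2$, completing the bound $\Vert\mu\Vert_{\mathcal{C}}\leq aK$ with an absolute $a$.

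\emph{Main obstacle.} The delicate step is the geometric covering: showing that $E_t\subset\D$ is captured by a family of Carleson boxes whose total base length on $\T$ is controlled by $m(F_t)$. This rests on (i) the pointwise inequality $|f(z)|\leq Nf(u)$ for every $u$ in the shadow arc of $z$, which places these shadows inside $F_t$, and (ii) a Whitney decomposition of $F_t$ producing a collection of boxes with absolute overlap constants, so that the final constant $a$ is truly universal and depends neither on $\mu$ nor on $f$. Everything else is then a routine layer-cake computation and an appeal to the maximal theorem.
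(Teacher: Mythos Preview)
The paper does not actually prove this theorem: it is quoted as the classical Carleson embedding theorem, with a reference to \cite[p.~37]{CowenMacCluer:1994}, and then used as a black box in the estimates of Section~\ref{Section:general}. So there is no ``paper's own proof'' to compare your proposal against.

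That said, your sketch is a correct outline of one of the standard proofs. The necessity argument via normalized reproducing kernels is the usual one, and your numerical constants check out. For sufficiency, the route through the non-tangential maximal function $Nf$ and a Whitney-type decomposition of $\{Nf>t\}$ is one of the classical approaches (another common one uses a direct dyadic decomposition of $\D$ and the $H^1$--BMO or Littlewood--Paley machinery, and yet another goes through a $T(1)$-style balayage argument). Your identification of the geometric covering step as the crux is accurate: one must check that every $z$ with $|f(z)|>t$ lies in a Carleson box (tent) built over one of the Whitney arcs of $F_t$, which follows because the shadow arc of $z$ has length comparable to $1-|z|$ and is contained in $F_t$, hence in a single Whitney arc of comparable length. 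Once that is in hand, the layer-cake and maximal inequalities finish the job with an absolute constant, as you say.
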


Let again $\varphi^{\ast}(u)=\lim_{r\to 1^{-}}\varphi(ru)$. Littlewood's subordination principle implies that $m_\varphi=\varphi^{\ast}(m)$, the image under $\varphi^{\ast}$ of this Haar measure, is a Carleson measure for $H^2$, and we will write $\rho_{\varphi}$ instead of $\rho_{m_{\varphi}}$. We will also write  $\Vert . \Vert$ instead of  $\Vert . \Vert_2$ when there is no ambiguity.\\ 

 The main subject of this article are {\em weighted composition operators} on $H^2$, defined in terms of a weight $w$ (typically $w\in H^\infty$) and a symbol $\varphi$, according to $T_{w,\varphi}:=M_w\, C_\varphi$. Often we will denote this operator by $T$ for short.\\
 
Depending on $\varphi$, the operator $C_\varphi$ can be compact or not. As we already said, passing from $C_\varphi$ to $M_wC_\varphi$ can only improve this compactness, or the behavior of singular numbers, thanks to the ideal property of both notions. It is the purpose of this paper to investigate the question more closely.

\subsection{A simple general upper bound} 

Throughout this section and the rest of this paper, we use the notation $A\lesssim B$ (resp.~$A\gtrsim B$) to indicate that $A\leq \lambda B$ (resp.~$A\geq \lambda B$) where $\lambda$ is a uniform positive  constant, ``uniform'' being clear from the context.  Let $\varphi$ be a symbol continuous on $\overline{\D}$, fixing $1$, and $w_0\in H^\infty$  a weight. We set $\gamma(t)=\varphi(e^{it})$. We assume that $\varphi(\D)$ has no other contact points than 
$1$ with $\T$ and more precisely that:
\begin{equation}\label{secarte} 
     |t|\leq \pi\Rightarrow 1-|\gamma(t)|\geq \omega(|t|)\,,
\end{equation}
where $\omega:[0,\pi]\to \R^{+}$ is an increasing function with $\omega(0)=0$. We also set 
\begin{equation}\label{alse} \delta_{w_0}(h)=\sup_{|t|\leq \omega^{-1}(h)} |w_{0}(\gamma(t))|.\end{equation}
 We then have an upper bound for a special class of weights related to $\varphi$:
\begin{theorem}\label{geup} Let $\varphi$ be a symbol satisfying (\ref{secarte}), $w_0$ a weight,  $w=w_0\circ \varphi$,  $T=M_w\,C_\varphi$,  and $a_n=a_{n}(T)$. Then: 
\begin{equation}\label{irem}a_{n}\lesssim \inf_{0<h<1}\Big[e^{-nh}+\delta_{w_0}(h)\Big]=:\rho_n.\end{equation}
\end{theorem}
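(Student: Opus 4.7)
The plan is to exhibit, for each $h\in (0,1)$, an operator $R$ of rank at most $n$ such that $\|T-R\|\lesssim e^{-nh}+\delta_{w_0}(h)$; taking the infimum over $h$ and using the defining formula \eqref{alter} will give the announced bound (up to the usual harmless shift $n\mapsto n-1$, which only costs a factor $\le e$ on the exponential term).

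The natural candidate is the truncated operator. Let $P_n$ be the projection of $H^2$ onto polynomials of degree $<n$, and set $R:=T\,P_n=M_w\,C_\varphi\,P_n$, which has rank $\le n$. For $f\in H^2$, write $f-P_nf=z^n g$ with $g\in H^2$ and $\|g\|\le \|f\|$. Then
\[
(T-R)f(z)=w(z)\,\varphi(z)^n\,g(\varphi(z))=w_0(\varphi(z))\,\varphi(z)^n\,(C_\varphi g)(z),
\]
so by the boundary-integral formula \eqref{alt} and the substitution $\gamma(t)=\varphi(e^{it})$,
\[
\|(T-R)f\|_2^{\,2}=\int_{-\pi}^{\pi}|w_0(\gamma(t))|^2\,|\gamma(t)|^{2n}\,|g(\gamma(t))|^2\,\frac{dt}{2\pi}.
\]

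The key step is to split this integral at $|t|=\omega^{-1}(h)$. On $A:=\{|t|\le\omega^{-1}(h)\}$, the definition \eqref{alse} yields $|w_0(\gamma(t))|\le \delta_{w_0}(h)$, while $|\gamma(t)|\le 1$ gives
\[
\int_A |w_0(\gamma(t))|^2|\gamma(t)|^{2n}|g(\gamma(t))|^2\frac{dt}{2\pi}\;\le\;\delta_{w_0}(h)^2\,\|C_\varphi g\|_2^{\,2}\;\lesssim\;\delta_{w_0}(h)^2\,\|f\|_2^{\,2}.
\]
On the complement $A^c$, hypothesis \eqref{secarte} gives $|\gamma(t)|\le 1-\omega(|t|)\le 1-h$, hence $|\gamma(t)|^{2n}\le (1-h)^{2n}\le e^{-2nh}$, and with the trivial bound $|w_0|\le \|w_0\|_\infty$ we obtain
\[
\int_{A^c}|w_0(\gamma(t))|^2|\gamma(t)|^{2n}|g(\gamma(t))|^2\frac{dt}{2\pi}\;\le\;\|w_0\|_\infty^2\,e^{-2nh}\,\|C_\varphi g\|_2^{\,2}\;\lesssim\;e^{-2nh}\|f\|_2^{\,2}.
\]
Adding the two contributions and taking square roots gives $\|T-R\|\lesssim e^{-nh}+\delta_{w_0}(h)$.

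Since $R$ has rank $\le n$, \eqref{alter} (applied with a subspace of dimension $\le n$) yields $a_{n+1}(T)\lesssim e^{-nh}+\delta_{w_0}(h)$; as $h\in(0,1)$ is arbitrary, $a_{n+1}(T)\lesssim \rho_n$, and since $\rho_{n-1}\le e\,\rho_n$ the same estimate holds for $a_n(T)$, which is the claim. No real obstacle is anticipated; the only point that deserves care is the treatment of the boundary values of $g\circ\varphi$ on the set where $|\gamma(t)|=1$, which is handled by Littlewood's subordination principle guaranteeing that the boundary integral of $|C_\varphi g|^2$ is finite and bounded by $\|C_\varphi\|^2\|g\|^2$.
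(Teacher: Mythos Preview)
Your proof is correct and follows the same overall strategy as the paper: both use the rank-$n$ approximant $TP_n$ (equivalently, restrict $T$ to the codimension-$n$ subspace $z^nH^2$ via the Gelfand-number formula \eqref{gelf}) and split the resulting boundary integral according to a parameter~$h$. The difference lies in how the split is performed. You cut directly in the boundary parameter $t$ at $|t|=\omega^{-1}(h)$ and bound each piece using only the boundedness of $C_\varphi$ (Littlewood's principle); the paper instead cuts in the image variable $z$ at $|z|=1-h$ and handles the annulus piece via the Carleson embedding theorem, estimating the Carleson norm of the restricted measure $|w_0|^2\,dm_\varphi\big|_{\{|z|>1-h\}}$. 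Your argument is the more elementary of the two, since it bypasses Theorem~\ref{thm:carleson} altogether; the paper's route has the advantage of setting up a Carleson-measure framework that is reused later (notably in the proof of Theorem~\ref{onemain}, where the finite-rank approximant is a Blaschke product and a direct $t$-split would be less convenient).
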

\begin{proof}The proof is close to that of \cite[Thm.~5.1]{LiQueffelecRodriguez-Piazza:2012_3}. Let $f=z^n g\in z^nH^2=:E$, a subspace of $H^2$ of codimension $n<n+1$. Assume that $\Vert f\Vert=1$, so that $\Vert g\Vert=1$. We see that, given $0<h<1$: 
\begin{align*}
		\Vert T(f)\Vert^2
		&=
		\int_{\T} |w_{0}(\varphi(u))|^2  |\varphi(u)|^{2n} |g(\varphi(u))|^2 dm(u)
		\\
		&=
		\int_{\D} |w_{0}(z)|^2|z|^{2n}  |g(z)|^2 dm_{\varphi}(z)
		\\
		&=
		\int_{(1-h)\overline{\D}}   |w_{0}(z)|^2|z|^{2n} |g(z)|^2 dm_{\varphi}(z)+\int_{\D}  |z|^{2n} |g(z)|^2 d\mu_{h}(z)
		\\
		&\lesssim 
		(1-h)^{2n}+\Vert \mu_h\Vert_{\mathcal{C}}\lesssim e^{-2nh}+\Vert \mu_h\Vert_{\mathcal{C}}\,,
\end{align*}
where $\mu_h$ denotes the restriction (trace) of the measure $|w_0|^2 dm_{\varphi}$ to the annulus $A_h=\{z:1-h<|z|<1\}$.
It remains to estimate $\Vert \mu_h\Vert_{\mathcal{C}}$, which we do through Carleson's embedding theorem.

Since $\mu_h$ is carried by $A_h$, we can consider only boxes $S=S(\xi,r)$ with $0<r\leq h$. Then 
\begin{align*}
	I_r
	:=&
	\int1_{S}(u)d\mu_{h}(u)=\int1_{S}(u)|w_{0}(u)|^2 dm_{\varphi}(u)
	\\
	=&
	\int_{S\cap\varphi(\partial{\D})}|w_{0}(u)|^2dm_{\varphi}(u)
	\leq 
	\Big(\sup_{S\cap\varphi(\partial{\D})}|w_{0}(u)|^2\Big)\,m_{\varphi}(S)
\end{align*}
since $m_\varphi$ is carried by $\varphi(\partial{\D})$. Now, if $u=\varphi(e^{it})\in S$ and $|t|\leq \pi$, then 
$$\omega(|t|)\leq1-|\gamma(t)|\leq |\xi-\gamma(t)|\leq r,$$ that is $|t|\leq \omega^{-1}(r)$. 
Therefore, $I_r\leq \delta_{w_0}^{2}(r)\rho_{\varphi}(r)\lesssim r \delta_{w_0}^{2}(h)$, and  by Carleson's  theorem:
$$\Vert \mu_h\Vert_{\mathcal{C}}\lesssim \sup_{0<r<h}\frac{I_r}{r}\lesssim \delta_{w_0}^{2}(h),$$
giving $a_{n+1}\lesssim \rho_n$ in view of the alternative definition (\ref{gelf}) of approximation numbers, and ending the proof after a change of $n+1$ to $n$.
\end{proof}

\subsection{From non-compactness to compactness}

It is known that twisting a non-compact composition operator $C_\varphi$ with a multiplication operator $M_w$ can result in $M_wC_\varphi$ being compact (see, for example, \cite{GKP}). We now give a first application of Theorem \ref{geup}, in which this effect is demonstrated in terms of explicit estimates on approximation numbers, which seems to be new. 

Note that the compactness result of part $ii)$ of the following theorem also follows by applying \cite[Thm.~2.8]{GKP} to the specified symbol $\varphi$ and weight~$w$.

\begin{theorem}\label{thm:disc-in-disc}
	Let $\varphi(z)=\frac{1+z}{2}$ and $w(z)=(1-z)^\alpha,\  \alpha>0$. Then: 
	\begin{enumerate}
		\item $C_\varphi$ is non-compact and indeed $\Vert C_\varphi\Vert_e=\Vert C_\varphi\Vert=\sqrt 2$. 
		\item $T=M_w\,C_\varphi$ is compact and its approximation numbers verify\\ 
		$a_{n}(T)\lesssim\big(\frac{\log n}{n}\big)^{\alpha/2}$.
	\end{enumerate}
	In particular, a weighted composition operator  $T=M_w C_\varphi$ can be compact while its ``compositional symbol'' $\varphi$ has no fixed point inside~$\D$.
\end{theorem}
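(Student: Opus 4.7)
The plan is to treat the two parts separately. Part (i) is essentially a norm and essential-norm computation for a linear fractional symbol whose Denjoy--Wolff point sits on the boundary at $1$; part (ii) is a direct application of Theorem \ref{geup} once one verifies that $w$ is (up to a constant) of the form $w_0\circ\varphi$ and identifies the functions $\omega$ and $\delta_{w_0}$ attached to this pair.

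For (i), I would first get the lower bound $\|C_\varphi\|_e\geq\sqrt{2}$ by testing on the normalized reproducing kernels $k_r=K_r/\|K_r\|$, using $C_\varphi^* K_a=K_{\varphi(a)}$ to write
\[
\|C_\varphi^* k_r\|^2 \;=\; \frac{1-r^2}{1-|\varphi(r)|^2}\,.
\]
Since $1-\varphi(r)=(1-r)/2$, this ratio tends to $2$ as $r\to 1^-$, and because $k_r$ tends to zero weakly the bound transfers automatically to the essential norm. For the matching upper bound $\|C_\varphi\|\leq\sqrt 2$, I would transfer the problem to the upper half-plane $\mathbb{H}$ via the Cayley transform: $\varphi$ becomes the affine map $\tilde\varphi(w)=2w+i$, and the conjugated operator on $H^2(\mathbb{H})$ reduces to $h(w)\mapsto 2\,h(2w+i)$, because the Jacobian factor $(\tilde\varphi(w)+i)/(w+i)$ collapses identically to $2$. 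A one-line dilation/translation computation based on $\int_{\R}|h(y+i)|^2\,dy\leq\|h\|^2$ then delivers the sharp constant.

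For (ii), I would apply Theorem \ref{geup} with $w_0(u)=(1-u)^\alpha\in H^\infty$; one checks immediately that $w_0\circ\varphi=2^{-\alpha}w$, and the constant is harmless. The auxiliary functions are extracted from $\gamma(t)=(1+e^{it})/2$: on one hand $|\gamma(t)|=\cos(t/2)$ gives $1-|\gamma(t)|\gtrsim t^2$ for $|t|\leq\pi$, so (\ref{secarte}) holds with $\omega(s)\asymp s^2$ and $\omega^{-1}(h)\asymp\sqrt h$; on the other hand $|1-\gamma(t)|=|\sin(t/2)|\leq|t|/2$, hence $\delta_{w_0}(h)\lesssim h^{\alpha/2}$. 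Feeding these into (\ref{irem}) leaves only the optimisation of $e^{-nh}+h^{\alpha/2}$, for which the balanced choice $h=\tfrac{\alpha}{2}\log n/n$ gives $e^{-nh}=n^{-\alpha/2}$, dominated by $h^{\alpha/2}\asymp(\log n/n)^{\alpha/2}$, and the announced rate follows; compactness is then automatic, and $\varphi$ plainly has no interior fixed point since its only fixed point lies on $\T$.

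The main technical point is the upper bound $\|C_\varphi\|\leq\sqrt 2$ in (i): the general Littlewood-type estimate only yields $\sqrt{(1+|\varphi(0)|)/(1-|\varphi(0)|)}=\sqrt 3$, so one must genuinely exploit the linear-fractional structure of $\varphi$. The half-plane transference seems the cleanest route, the miraculous collapse of the Jacobian factor to a constant being what makes it succeed, although Cowen's explicit norm formula for linear fractional composition operators would serve equally well. Everything else is routine: a Taylor expansion of $\cos$ for the estimate on $|\gamma(t)|$, and the standard saddle-point balance in~(\ref{irem}).
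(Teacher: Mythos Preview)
Your proposal is correct and, for part~(ii), follows exactly the paper's route: apply Theorem~\ref{geup} with $w_0(z)=c(1-z)^\alpha$, read off $\omega(s)\asymp s^2$ from $1-|\gamma(t)|=2\sin^2(t/4)$, obtain $\delta_{w_0}(h)\lesssim h^{\alpha/2}$, and optimise $e^{-nh}+h^{\alpha/2}$ at $h\asymp (\log n)/n$. Your handling of the harmless constant $2^{\pm\alpha}$ is fine.

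The only difference lies in part~(i). The paper does not prove it at all; it simply records the result as well known and cites Shapiro's essential-norm paper and \cite{CLDA}. Your argument --- the reproducing-kernel lower bound $\|C_\varphi^*k_r\|^2=4(1+r)/(3+r)\to 2$, combined with the half-plane transference where $\tilde\varphi(w)=2w+i$ makes the Jacobian factor $(\tilde\varphi(w)+i)/(w+i)$ collapse to the constant~$2$ --- is a clean and self-contained proof that the cited references would ultimately supply. So here you are giving more than the paper does, not less, and by a perfectly standard method.
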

\begin{proof}Recall that $\Vert C_\varphi\Vert_e=:\lim_{n\to \infty}a_{n}(C_\varphi)$ is the essential norm of $C_\varphi$. The first item i) is well-known (\cite{Shapiro:1987}, see also \cite{CLDA}). For upper bounds, we may  use Theorem \ref{geup} since   $w=w_0\circ \varphi$ where $w_{0}(z)=2^{\alpha}\,(1-z)^\alpha$.\\
 Next,  we observe that, for $|t|\leq \pi$:
$1-|\gamma(t)|=1-\cos(t/2)=2\sin^{2}(t/4)\geq  \delta t^2$, so that, up to absolute constants, we are allowed to take $\omega(h)=h^2$ and $\omega^{-1}(h)=\sqrt h$ in Theorem \ref{geup}. Since $|w_{0}(\gamma(t))| \lesssim|1-\gamma(t)|^\alpha\leq |t|^\alpha$, this implies that $\delta_{w_0}(h)\lesssim h^{\alpha/2}$, and subsequently that 
$$a_{n}(T)\lesssim   \inf_{0<h<1}\Big[e^{-nh}+\delta_{w_0}(h)\Big] \lesssim \inf_{0<h<1}\Big[e^{-nh}+h^{\alpha/2}\Big]\lesssim \Big(\frac{\log n}{n}\Big)^{\alpha/2}$$
by taking $h=C\frac{\log n}{n}$ where $C$ is a large numerical constant.  This gives the claimed upper bound. \\
 Finally, the fixed point of $\varphi$ is $1$ and $1\notin \D$.
\end{proof} 
\noindent{\bf Remark:} The simple estimates given here are  not sharp (Theorem \ref{onemain} will give sharper results) and are  just intended to show that multiplication by $w$ can improve the decay of approximation numbers. In particular, the logarithmic factor can be dropped in the example of Thm.~\ref{thm:disc-in-disc}, and whereas the estimate in Theorem~\ref{thm:disc-in-disc}~{\em ii)} give membership in the Hilbert-Schmidt class for $\alpha>1$, one actually has the following stronger statement.
\newpage
\begin{proposition}
	The following are equivalent in the previous example: \begin{enumerate}
		\item $M_{w}C_\varphi$ is Hilbert-Schmidt.
		\item $\alpha>1/2$.
	\end{enumerate}
\end{proposition}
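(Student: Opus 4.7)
The plan is to compute $\|T\|_{HS}^2$ directly via the orthonormal basis $\{z^n\}_{n\geq 0}$ of $H^2$. Since $T(z^n) = w\cdot\varphi^n$, the boundary representation of the $H^2$-norm gives
$$ \|T\|_{HS}^2 = \sum_{n=0}^\infty \|w\varphi^n\|_2^2 = \sum_{n=0}^\infty \int_\T |w(u)|^2 |\varphi(u)|^{2n}\,dm(u). $$
All summands being nonnegative, monotone convergence lets me exchange sum and integral; since $|\varphi(u)|=1$ only at $u=1$, which is an $m$-null point, the geometric series sums $m$-a.e.\ on $\T$ and yields the clean identity
$$ \|T\|_{HS}^2 = \int_\T \frac{|w(u)|^2}{1-|\varphi(u)|^2}\,dm(u). $$
This reduces the equivalence to a single question of integrability on $\T$.

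The remaining step is trigonometric. With $u=e^{it}$, elementary computation gives $|\varphi(e^{it})|^2 = |1+e^{it}|^2/4 = \cos^2(t/2)$, hence $1-|\varphi(e^{it})|^2 = \sin^2(t/2)$, while $|w(e^{it})|^2 = |1-e^{it}|^{2\alpha} = 4^\alpha|\sin(t/2)|^{2\alpha}$. Substituting, $\|T\|_{HS}^2$ is, up to the positive constant $4^\alpha/(2\pi)$, equal to
$$ \int_{-\pi}^\pi |\sin(t/2)|^{2\alpha-2}\,dt. $$
The only singularity of the integrand is at $t=0$, where it behaves like $|t|^{2\alpha-2}$; hence the integral is finite if and only if $2\alpha - 2 > -1$, i.e.\ $\alpha > 1/2$, which is exactly the claimed equivalence.

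I do not anticipate any substantive obstacle. The only mild technical points to mention are that $|\varphi|<1$ holds only $m$-almost everywhere (so that the integrand in the reduced formula is well-defined a.e.), and that the branch chosen for $(1-z)^\alpha$ to make $w\in H^\infty$ is immaterial because only $|w|$ enters the computation. Once the Hilbert-Schmidt identity $\|M_w C_\varphi\|_{HS}^2 = \int_\T |w|^2/(1-|\varphi|^2)\,dm$ is on the table, everything collapses to a one-parameter integrability test.
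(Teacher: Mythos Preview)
Your proof is correct and follows essentially the same route as the paper: compute $\sum_n \|T(z^n)\|^2$, swap sum and integral to obtain $\int_\T |w|^2/(1-|\varphi|^2)\,dm$, and reduce to the integrability of $|t|^{2\alpha-2}$ near $t=0$. The paper's argument is terser (it writes the integrand directly as $|w(e^{i\theta})|^2/(1-\cos\theta)$ and notes the equivalent estimate $\|T(e_n)\|\approx n^{-\alpha/2-1/4}$ as an alternative), but the substance is identical.
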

\begin{proof}If $T=M_w C_\varphi$ and if $(e_n)$ is the canonical basis of $H^2$, we find that
$$\sum_{n=0}^\infty \Vert T(e_n)\Vert^2=2\int_{\T} \frac{|w(e^{i\theta})|^2}{1-\cos \theta}d\theta\approx \int_{0}^\pi \theta^{2\alpha-2}d\theta$$ and the latter integral is finite iff $\alpha>1/2$. Alternatively, we could use $\Vert T(e_n)\Vert\approx n^{-\alpha/2-1/4}$.
\end{proof}

\subsection{Maximal  general possible decay}

It was proved in \cite{LiQueffelecRodriguez-Piazza:2012_3} that singular numbers of composition operators never have a superexponential decay. The same holds for weighted composition operators.

\begin{theorem}\label{saho}Let $T=M_w\,C_\varphi$ be a weighted composition operator. Then $\beta^{-}(T)>0$, that is, there exist positive constants $\delta$ and $\rho$ such that for any integer $n\geq 1$: 
$$a_{n}(T)\geq \delta \rho^n.$$
\end{theorem}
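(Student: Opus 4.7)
The plan is to verify the statement via a case split. If $T$ is not compact, then the essential norm $\|T\|_e > 0$, and since $a_n(T)$ decreases monotonically to $\|T\|_e$ we have $a_n(T) \geq \|T\|_e$ for every $n$, so the claim holds with $\rho = 1$ and $\delta = \|T\|_e$. So we may assume $T$ is compact and try to produce constants $\delta, \rho > 0$ with $a_n(T) \geq \delta \rho^n$.

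The main input is the adjoint identity on reproducing kernels,
\[
T^{*}K_z = \overline{w(z)}\,K_{\varphi(z)} \qquad (z \in \D),
\]
combined with Weyl's inequality $a_n(T) \geq |\lambda_n(T)|$. In the generic case where $\varphi$ has an interior fixed point $a \in \D$ with $0 < |\varphi'(a)| < 1$ and $\mu := w(a) \neq 0$, one constructs explicit eigenfunctions of $T$ as $f_n = K^n e^{\sigma}$, where $K$ is the Koenigs map of $\varphi$ at $a$ and $\sigma$ solves the cohomological equation $\sigma - \sigma \circ \varphi = \log(w/\mu)$ (cf.\ \cite{GKP}); a direct computation gives $T f_n = \mu\, \varphi'(a)^n f_n$, so Weyl's inequality yields $a_n(T) \geq |\mu|\,|\varphi'(a)|^{n-1}$, the desired exponential lower bound.

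The main obstacle is the degenerate sub-cases: $\varphi$ having no interior fixed point (as in Theorem~\ref{thm:disc-in-disc}, where $T$ can still be compact), or a critical interior fixed point ($\varphi'(a) = 0$), or $w$ vanishing at the Denjoy--Wolff point. In each of these, the eigenvalue argument breaks down. For these cases my plan is to adapt the approach of \cite{LiQueffelecRodriguez-Piazza:2012_3} using the Bernstein characterization \eqref{serge}: choose any $a \in \D$ with $w(a) \neq 0$, take $n$ points $z_1, \ldots, z_n$ well-separated in the pseudohyperbolic metric inside a small compact disk around $a$, and set $F_n = \operatorname{span}\{K_{z_j}\}_{j=1}^n$. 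Since $a_n(T) = a_n(T^{*})$ and $T^{*}F_n = \operatorname{span}\{K_{\varphi(z_j)}\}$, the smallest singular value of $T^{*}|_{F_n}$ reduces to the smallest generalized eigenvalue $\lambda_{\min}(G^{-1/2} G' G^{-1/2})$ of the Pick matrix pair
\[
G_{jk} = \frac{1}{1 - z_j \bar z_k}, \qquad G'_{jk} = \frac{\overline{w(z_j)}\, w(z_k)}{1 - \varphi(z_j) \overline{\varphi(z_k)}}\,.
\]
The hardest part is obtaining an exponential lower bound $\lambda_{\min}(G^{-1/2} G' G^{-1/2}) \gtrsim \rho^{2n}$; the geometric input is that $\varphi$ is Schwarz--Pick nonexpansive and, on the chosen compact subset, distorts pseudohyperbolic distances by at most a fixed positive factor, while $w$ is bounded away from $0$ there, so the two matrices are comparable up to an exponential-in-$n$ factor essentially dictated by the pseudohyperbolic contraction of $\varphi$.
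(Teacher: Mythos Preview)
Your ``generic case'' argument has a genuine gap: the inequality $a_n(T)\geq|\lambda_n(T)|$ that you attribute to Weyl is false for general compact operators. Weyl's inequalities only compare \emph{products} (or sums), $\prod_{j=1}^n|\lambda_j|\leq\prod_{j=1}^n s_j$; they give no termwise control. A $2\times 2$ Jordan block $\bigl(\begin{smallmatrix}1 & N\\ 0 & 1\end{smallmatrix}\bigr)$ already has $|\lambda_2|=1$ but $s_2\approx 1/N$. Even using the multiplicative Weyl inequality correctly, the geometric eigenvalue sequence $|\lambda_j|=|\mu|\,|\varphi'(a)|^{j-1}$ only yields $\prod_{j=1}^n s_j\geq c^n|\varphi'(a)|^{n(n-1)/2}$, from which one can extract at best $s_n\gtrsim e^{-cn^2}$, not $s_n\gtrsim\rho^n$. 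So the eigenvalue route does not deliver the exponential lower bound. (There are also unaddressed issues with the construction itself: $f_n=K^n e^\sigma$ need not lie in $H^2$, and the cohomological equation for $\sigma$ requires $w$ to be zero-free, which is not assumed.)

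Your ``degenerate'' branch is closer in spirit to what the paper does, but you have not carried it out: you set up the Gram matrices $G,G'$ and then acknowledge that the hardest step --- bounding $\lambda_{\min}(G^{-1/2}G'G^{-1/2})$ below by $\rho^{2n}$ --- remains open. The paper avoids any case split and treats all symbols uniformly via interpolation constants. One chooses a compact disk $U\subset\D$ on which $w$ is zero-free, a closed disk $V\subset\varphi(U)$, and $n$ equidistributed points $v_j$ on $\partial V$; the key classical fact (Garnett) is that the $H^\infty$-interpolation constant of such a configuration satisfies $I_v\leq C^n$ with $C$ depending only on $V$. The two-sided Riesz-basis estimate \eqref{wk} for reproducing kernels, combined with the identity $T^*K_a=\overline{w(a)}\,K_{\varphi(a)}$ and the Bernstein characterization \eqref{serge} applied to $T^*$ on $\mathrm{span}\{K_{u_j}\}$ (with $\varphi(u_j)=v_j$), then gives directly $a_n(T)\gtrsim(\inf_U|w|)\cdot\mathrm{dist}(U,\partial\D)^{1/2}\cdot C^{-2n}$. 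No Denjoy--Wolff analysis, Koenigs map, or cohomological equation is needed.
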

\begin{proof}We first recall that the interpolation constant $I_z$ of a (finite or not) sequence $z=(z_j)$ of distinct points of $\D$ is the smallest constant $K$ such that, for any bounded  sequence $c=(c_j)$, one can find $h\in H^\infty$ such that 
$$h(z_j)=c_j \quad \forall j \hbox{\quad and}\quad \Vert h\Vert_\infty\leq K\sup_{j}|c_j|.$$
The connection between interpolation constants and reproducing kernels is given by the well-known two-sided inequality \cite[p.~302-303]{Nikolskii:2002}, valid for all scalars $\lambda_j$:

\begin{equation}\label{wk}I_{z}^{-2}\sum_j |\lambda_j|^2 \Vert K_{z_j}\Vert^2\leq \Big\Vert \sum_j \lambda_j K_{z_j}\Big\Vert^2\leq I_{z}^{2}\sum_j |\lambda_j|^2 \Vert K_{z_j}\Vert^2.\end{equation}
We shall now rely on the following lemma:
\begin{lemma}\label{berber} Let $u=(u_j)_{1\leq j\leq n}$ be a sequence of length $n$ of points of $\D$  and $v=(v_j)=(\varphi(u_j))_{1\leq j\leq n}$. We assume that  the $v_j$'s (and hence the $u_j$'s) are distinct. Let $I_v$ be the interpolation constant  of $v$. Then  if \ $T=M_w\, C_\varphi$:
$$a_{n}(T)\gtrsim \big(\inf_{1\leq j\leq n} |w(u_j)|\big)\times \big( \inf_{1\leq j\leq n} \sqrt{1-|u_j|^2}\big)\times I_{v}^{-2}.$$
\end{lemma}

Indeed, we use the ``model space'' $E$ generated by the reproducing kernels $K_{u_1},\ldots, K_{u_n}$ of $H^2$ as well as the mapping equation for weighted composition operators
\begin{equation}\label{mapequ} T^{\ast}(K_a)=\overline{w(a)} K_{\varphi(a)},\end{equation}
a well-known and readily verified fact since for all $g\in H^2$:
\begin{align*}
     \langle g,T^{\ast}(K_a)\rangle
     &=
     \langle Tg, K_a\rangle= \langle w\,(g\circ \varphi), K_a\rangle
     \\
     &=w(a)\,g(\varphi(a))=w(a) \langle g, K_{\varphi(a)}\rangle
     =\langle g,\overline{w(a)}K_{\varphi(a)}\rangle.
\end{align*}

Now,  (\ref{mapequ}) and estimates analog to those of \cite{LiQueffelecRodriguez-Piazza:2012} prove the lemma. We provide some details. Let $f=\sum_{j=1}^n \lambda_j K_{u_j}\in S_E$, the unit sphere of $E$. So that, using (\ref{wk}) for the finite sequence $u=(u_j)_{1\leq j\leq n}$\ : 
\begin{equation}\label{unus}1\leq I_{u}^{2}\, \sum_{j=1}^n |\lambda_j|^2 \Vert K_{u_j}\Vert^2.\end{equation}
Similarly, since $T^{\ast}(f)=\sum_{j=1}^n \lambda_j \overline{w(u_j)}K_{v_j}$ where $v_j=\varphi(u_j)$, there holds:
\begin{equation}\label{duo}\Vert T^{\ast}(f)\Vert^2\geq I_{v}^{-2} \sum_{j=1}^n |\lambda_j|^2|w(u_j)|^2 \Vert K_{v_j}\Vert^2.\end{equation}
Now, $I_u\leq I_v$ (if $h\in H^\infty$ interpolates $c$ at $v$, $h\circ \varphi$ interpolates $c$ at $u$), and clearly  $\Vert K_{v_j}\Vert^2\geq (1-|u_j|^2)\Vert K_{u_j}\Vert^2$ for all $j$. Lemma \ref{berber} ensues via (\ref{serge}).
 {\hfill$\square$}

\medskip

Finally, let $U\subset \D$ be a compact disk on which $w$ does not vanish, let $\delta=\inf_{U}|w|>0$ and $V$ be a closed disk with positive radius contained in $\varphi(U)$. Let $(v_j)$ be a sequence of $n$ equidistributed points on $\partial V$  and $(u_j)$ a sequence of length $n$ in $U$ such that $\varphi(u_j)=v_j$. We know (see \cite{Garnett:2007}, p.~284) that $I_v\leq C^n$ where $C$ only depends on $V$. We set $\eta^2=\hbox{\ dist} (U, \partial{\D})>0$. Lemma~\ref{berber} then gives us 
$$a_{n}(T) \gtrsim \delta\times  \eta\times C^{-2n},$$
which ends the proof of Thm.~\ref{saho}.
\end{proof}

\subsection{Maximal  special possible decay}
Theorem \ref{saho} proved that one never has superexponential decay for the approximation numbers $a_{n}(M_w C_\varphi)$. The following result indicates that, when $\Vert \varphi\Vert_\infty=1$, then whatever the non-zero weight $w$, the  numbers  $a_{n}(M_w C_\varphi)$ indeed have at most subexponential decay. \\
\begin{theorem}\label{subex} Suppose that the symbol $\varphi$  satisfies $\Vert \varphi\Vert_\infty=1$. Let now  $T=M_w\,C_\varphi$ where $w\in \mathcal{M}(H^2,\varphi)$, assumed to be compact. Then $\beta(T)=1$, i.e. there exists a sequence $(\eps_n)$ of positive numbers with limit $0$ such that 
 $$a_{n}(C_\varphi)\geq e^{-n\, \eps_n}.$$ 
\end{theorem}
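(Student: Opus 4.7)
My plan is to apply Lemma~\ref{berber} to a sequence of $n$-tuples $(u_j^{(n)})_{1\le j\le n}$ coming from a single interpolating sequence in $\varphi(\D)$. Specifically, I would try to ensure that each of the three factors in
$$a_n(T)\gtrsim \big(\inf_{1\le j\le n}|w(u_j)|\big)\big(\inf_{1\le j\le n}\sqrt{1-|u_j|^2}\big)\,I_v^{-2}$$
is bounded below by $e^{-n\eps_n/3}$ with $\eps_n\to 0$, which would yield $a_n(T)\ge e^{-n\eps_n}$ as required.

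Since $\Vert\varphi\Vert_\infty=1$ and $\varphi$ is non-constant, the open set $\varphi(\D)$ has at least one contact point $\eta\in\T\cap\overline{\varphi(\D)}$. Inside $\varphi(\D)$, I would construct an infinite sequence $(v_k)\to\eta$ that is uniformly separated in the pseudo-hyperbolic metric (hence a Carleson interpolating sequence with some finite constant $I_\infty$), and that satisfies $1-|v_k|\asymp 1/k^2$. For every $n$, the first $n$ terms then still have interpolation constant at most $I_\infty$, so $I_v^{-2}\ge I_\infty^{-2}$ uniformly in $n$, which is certainly subexponential.

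For each $v_k$, I would pick a preimage $u_k\in\varphi^{-1}(v_k)\subset\D$. The rate at which $|u_k|\to 1$ is governed by the local boundary behaviour of $\varphi$ near $\eta$, and this is the main obstacle: for wild boundary behaviour of $\varphi$, the preimages of $v_k$ may in principle cluster at $\T$ very rapidly. By choosing $\eta$ at a well-behaved contact point and using openness of $\varphi$ together with a Schwarz--Pick-type comparison, one can hope to obtain $1-|u_k|\gtrsim (1-|v_k|)^A$ for some $A>0$, giving $\inf_{k\le n}\sqrt{1-|u_k|^2}\gtrsim n^{-A}$, again subexponential. Concerning the new weighted factor, since $w$ is analytic and non-zero on $\D$ (otherwise $T=0$ and the statement is vacuous), its zero set is discrete, so a small perturbation of the sequence $(v_k)$ preserving its uniform separation lets us arrange that $|w(u_k)|\ge e^{-o(k)}$.

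The essential difficulty is therefore the quantitative control of $1-|u_k|$ under an arbitrary symbol $\varphi$ with $\Vert\varphi\Vert_\infty=1$. The same issue arose in the unweighted analogue proved in \cite{LiQueffelecRodriguez-Piazza:2012_3}, and I would adapt that construction directly; once the unweighted estimate is in place, the only new ingredient is handling the factor $\inf_j|w(u_j)|$ via the discreteness of the zero set of $w$, which is comparatively routine.
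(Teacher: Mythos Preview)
Your approach via Lemma~\ref{berber} with points tending to the boundary has two genuine gaps that I do not see how to close for a general symbol.

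First, the Schwarz--Pick lemma gives the \emph{wrong} inequality for your purposes: it says $1-|u_k|\le C\,(1-|v_k|)$, so preimages are forced \emph{closer} to $\T$ than the targets, not farther. A lower bound of the form $1-|u_k|\gtrsim(1-|v_k|)^A$ would be an angular-derivative type statement, and an arbitrary $\varphi$ with $\|\varphi\|_\infty=1$ need not have any such regularity at the contact point. The construction you cite from \cite{LiQueffelecRodriguez-Piazza:2012_3} is not there: that paper proves $\beta^-(C_\varphi)>0$ with points on a \emph{fixed} compact circle (as in Theorem~\ref{saho}); the general result $\beta(C_\varphi)=1$ is in \cite{LQR} and uses a completely different method.

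Second, and more seriously, the weight factor is not controllable along a sequence $u_k\to\T$. Since by Schwarz--Pick the $u_k$ \emph{must} approach $\T$, and $w$ is only assumed to lie in $\mathcal M(H^2,\varphi)\subset H^2$, the values $|w(u_k)|$ can decay super-exponentially (think of $w(z)=\exp(-(1-z)^{-2})$). Perturbing $v_k$ does not help: the perturbed preimage $u_k$ moves in a way you do not control, and there is no reason for $|w|$ to be bounded below along it.

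The paper avoids both difficulties by abandoning the interpolation-sequence route for this theorem. It proves the stronger formula $\beta(T)=\Gamma(\varphi(\D))$ via Green capacity and a theorem of Widom on sup-norm approximation over compact subsets of~$\D$. The key idea for the lower bound is to stay \emph{inside} $\D$: pick radii $r_j\uparrow 1$ with $\inf_{|z|=r_j}|w(z)|=\delta_j>0$ (possible since $w\not\equiv 0$), set $K_j=\varphi(r_j\T)$, and use Widom's result to get $a_n(T)\ge L'_j[\Gamma(K_j)]^n$ for each fixed $j$. Taking $n$th roots and then letting $j\to\infty$ gives $\beta^-(T)\ge\Gamma(\varphi(\D))=1$ when $\|\varphi\|_\infty=1$, since the capacity of $\varphi(\D)$ is infinite in that case. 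The weight enters only through the harmless constant $\delta_j$, which disappears after taking $n$th roots.
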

\begin{proof}We borrow from \cite{LQR} some results on the Green capacity $\hbox{Cap}(X)$ of  Borel  subsets $X$ of $\D$. 
 \begin{itemize}
\item $X\subset Y\Rightarrow \hbox{Cap}(X)\leq \hbox{Cap}(Y)$.
\item $X_j\uparrow X\Rightarrow \hbox{Cap}(X_j)\uparrow \hbox{Cap}(X)$.
\item $\hbox{Cap}(X)=\hbox{Cap}(\partial{X})=\hbox{Cap}(\overline{X})$ when $X$ is connected and $\overline{X}\subset \D$.
\item When  the $X_j$'s are connected, $\hbox{Cap}(X_j)\to \infty$ if $\hbox{\ diam}\, X_j\to 1$, where $\hbox{\ diam}$ denotes  the diameter of $X_j$  for the pseudo-hyperbolic distance $\rho$ in $\D$:
$$\rho(a,b)=\Big\vert\frac{a-b}{1-\overline{a}b}\Big\vert\cdot$$
\end{itemize}
We also set $\Gamma(X)=\exp(-1/\hbox{Cap}(X))$.
With these notations, we will now prove the following extension of the main result of \cite{LQR} to weighted composition operators, which implies Theorem \ref{subex}.

\begin{theorem}\label{extension} Let $T=M_w\,C_\varphi$ where $\varphi$ is an \textnormal{arbitrary symbol}, and let $w\in \mathcal{M}(H^2,\varphi)$.  Then, $\beta(T)$ exists and moreover 
$$\beta(T)=\Gamma(\varphi(\D)).$$   In particular, $\Vert \varphi\Vert_\infty=1\Rightarrow \beta(T)=1$.
 
\end{theorem}
For the upper bound we can assume (see the proof of the lower bound), that  $\Vert \varphi\Vert_\infty<1$, so that $C_\varphi:H^2\to H^\infty$. Since $w\in \mathcal{M}(H^2,\varphi)$, we have $w\in H^2$ and $M_{w}:H^\infty\to H^2$ is bounded. By the ideal property of  approximation numbers, we obtain
\begin{equation}\label{sup}a_{n}(T)\leq \Vert M_{w}:H^\infty\to H^2\Vert\times a_{n}(C_{\varphi}:H^2\to H^\infty).\end{equation}

But for each $r$ with $\Vert \varphi\Vert_\infty<r<1$, we can write $\varphi=\beta_r\circ \gamma_r$ where $\beta_{r}(z)=rz$ and $\gamma_{r}(z)=\frac{\varphi(z)}{r}$, so that $C_\varphi=C_{\gamma_r}C_{\beta_r}$, with $C_{\beta_r}:H^2\to H^\infty$ and $C_{\gamma_r}:H^\infty\to H^\infty$, and (using again the ideal property of approximation numbers) $a_{n}(C_\varphi:H^2\to H^\infty)\leq \Vert C_{\beta_r}\Vert \times a_{n}(C_{\gamma_r}:H^\infty\to H^\infty)$.
Now, using a result of Widom (see \cite{LQR}, Theorem 3.6), we can assert that
\begin{equation}\label{wiwi}\limsup_{n\to \infty} \big[a_{n}(C_{\gamma_r}:H^\infty\to H^\infty)\big]^{1/n}\leq \Gamma(\gamma_{r}(\D))= \Gamma\Big(\frac{\varphi(\D)}{r}\Big).\end{equation}
 This implies:
$$\limsup_{n\to \infty} \big[a_{n}(C_\varphi:H^2\to H^\infty)\big]^{1/n}\leq \Gamma\Big(\frac{\varphi(\D)}{r}\Big).$$
Letting $r$ tend to $1^{-}$ gives $\limsup_{n\to \infty} \big[a_{n}(C_\varphi:H^2\to H^\infty)\big]^{1/n}\leq \Gamma(\varphi(\D))$.  Inserting this estimate in (\ref{sup}) finally gives 
$$\limsup_{n\to \infty} \big[a_{n}(T)\big]^{1/n}\leq \Gamma(\varphi(\D))$$
or else $\beta^{+}(T)\leq \Gamma(\varphi(\D))$. \\
For the lower bound, we will make use of a second  result of H.~Widom (see \cite{LQR}), in which $K$ is a compact subset of $\D$ of positive capacity, and $\|\cdot\|_{\mathcal{C}(K)}$ denotes the sup-norm on the space of continuous functions on $K$:\\
 If $E$ is a subspace of $H^2$ with $\dim E<n$, there exists $f\in B_{H^\infty}$, the unit ball of $H^\infty$, such that ($a$ denoting a positive absolute constant)
\begin{equation}\label{hawi}\Vert f-h\Vert_{\mathcal{C}(K)}\geq a[\Gamma(K)]^n \hbox{\ for  all }\ h\in E. \end{equation}
Now, since $w$ is not identically $0$, we can find a sequence $(r_j)$ with $r_j\uparrow 1$ such that $|z|=r_j\Rightarrow w(z)\neq 0$, implying 
$$\delta_j:=\inf_{|z|=r_j} |w(z)|>0.$$
Set $K_j=\varphi(r_j \T)$ and let $E\subset H^2$ with $\dim E<n$. By (\ref{hawi}), we can find $f\in B_{H^\infty}\subset B_{H^2}$ such that, for all $h\in E$: 
\begin{equation}\label{jeudi} \Vert f-h\Vert_{\mathcal{C}(K_j)}\geq a\big[\Gamma(K_j)\big]^n.\end{equation}
 It ensues that 
\begin{align*}
	\Vert w(f\circ \varphi-h\circ \varphi)\Vert_{\mathcal{C}(r_j\T)}
	&\geq 
	(\inf_{r_j \T} |w|)( \sup_{r_j \T} |f\circ \varphi-h\circ \varphi)|)
	\\
	&\geq
	\delta_j \Vert f-h\Vert_{\mathcal{C}(K_j)}
	\geq 
	a\delta_j [\Gamma(K_j)\big]^n.
\end{align*}

\noindent Moreover, we obviously have the inequality  $\Vert g\Vert_{\mathcal{C}(r_j\T)}\leq L_j \Vert g\Vert_2$  for all functions $g\in H^2$, where the  positive constant $L_j$ only depends  on $j$. This implies 
$$a\delta_j \big[\Gamma(K_j)\big]^n\leq \Vert w(f\circ \varphi-h\circ \varphi)\Vert_{\mathcal{C}(r_j\T)}\leq L_j \Vert w(f\circ \varphi-h\circ \varphi)\Vert_{2}=L_j \Vert Tf-Th\Vert_{2}$$ so that, for some positive constant $L'_j$ depending only on $j$:
$$d(Tf, TE)\geq L'_j \big[\Gamma(K_j)\big]^n.$$ Since this holds for every subspace $E$ of $H^2$ with $\dim E<n$, we derive from (\ref{alter}) that 
$a_{n}(T)\geq L'_j \big[\Gamma(K_j)\big]^n$. Taking $n$th-roots and passing to $\liminf$ as $n\to \infty$, we get 
\begin{equation}\label{seche}\beta^{-}(T)\geq \Gamma(K_j).\end{equation}
To finish, we set 
$$\omega_j =r_j\,\D,\quad  K'_j=\varphi(\overline{\omega_j})=\overline{\varphi(\omega_j)}\supset K_j.$$ 
Clearly, $\hbox{Cap}(K'_j)\geq \hbox{Cap}(K_j)$. However,  $\partial{\varphi(\omega_j)}\subset \varphi(\partial{\omega_j})=K_j$ since $\varphi(\omega_j)$ is open. We hence get, using the reminded results on the capacity of connected sets:
$$\hbox{Cap}(K'_j)=\hbox{Cap}[\varphi(\omega_j)]=\hbox{Cap}[\partial{\varphi(\omega_j)}]\leq \hbox{Cap}(K_j).$$
So that $\hbox{Cap}(K'_j)=\hbox{Cap}(K_j)$ and that, using (\ref{seche}):
\begin{equation}\label{secher}\beta^{-}(T)\geq \Gamma(K'_j).\end{equation}
Letting $j\to \infty$, we obtain, since $K'_j\uparrow \varphi(\D)$:
$\beta^{-}(T)\geq \Gamma(\varphi(\D))$ and hence $\beta(T)=\Gamma(\varphi(\D))$.\\
 Now, suppose that $\Vert \varphi\Vert_\infty=1$. By conformal invariance of the parameters involved 
 ($\beta(C_\varphi), \hbox{\ Cap}(\varphi(\D)), \hbox{\ diam}(\varphi(\D))$), we can assume that $\varphi(0)=0$, without loss of generality. In that case, $\hbox{\ diam}(K'_j)\to 1$ and subsequently $\Gamma(K'_j)\to 1$. So that $\Gamma(\varphi(\D))=1$, and   
 finally $\beta(T)=1$.\end{proof}

\noindent{\bf Remark.} In the preceding, we limited ourselves to the case of the Hardy space, but several results, for example Theorems \ref{saho} and \ref{subex}, hold true for other Hilbert spaces of analytic functions on the disk.  Indeed, they hold true \cite{LQR} if the ambient norm in $H$ is defined by 
$$\Vert f\Vert^2=|f(0)|^2+\int_{\D}|f'(z)|^2 \omega(z)\frac{dA(z)}{\pi}$$
where $A$ is the area measure on $\C$ and $\omega$  a radial weight on $\D$, integrable on $(0,1)$. This framework includes for example the Hardy space ($\omega(r)=1-r$), the Bergman space ($\omega(r)=(1-r)^2$),  or the Dirichlet space $\mathcal{D}$ ($\omega(r)=1$), even though the multiplier space $\mathcal{M}(\mathcal{D})$ is not $H^\infty$.
To prove these results, we can adapt the methods of \cite{LQR}. We skip the details.\\

\section{Modular theory and Hardy spaces on strips}\label{Section:modular}

\subsection{From von Neumann algebras to Hardy spaces}

We now begin our discussion about how Hardy spaces and composition operators arise in the context of (specific) inclusions of von Neumann algebras and their real standard subspaces. 

The following definition is motivated by algebraic quantum field theory \cite{Haag:1996} (in two dimensions), where one builds models by assigning von Neumann algebras to regions (subsets) in $\Rl^2$ such that a number of geometric properties (inclusions of subsets, causal separation w.r.t. the Minkowski inner product, symmetries) are carried into corresponding algebraic properties (inclusions of algebras, commutants, group actions). 

A region of particular interest is the {\em wedge} $W:=\{x\in\Rl^2:\pm x_\pm>0\}$ (here $x=(x_+,x_-)$ is the parameterization of $x\in\Rl^2$ in light cone coordinates) \cite{Borchers:1992,BaumgrtelWollenberg:1992,BuchholzLechnerSummers:2011}. The following definition, taken from \cite{BuchholzLechnerSummers:2011}, lists the essential properties of a von Neumann algebra $\NN$ (together with the space-time translations $U$ and a vacuum vector $\Om$) that are required for $\NN$ to resemble the localization region $W$ in a physically reasonable manner.

\begin{definition}\label{def:BT}
     A Borchers triple consists of a von Neumann algebra $\NN$, acting on some Hilbert space $\Hil$, a cyclic and separating unit vector $\Om\in\Hil$, and a unitary strongly continuous representation $U$ of $\Rl^2$ such that 
     \begin{enumerate}[i)]
	  \item $U$ has {\em positive energy}. That is, writing $U(x)=e^{ix_+P_+}e^{ix_-P_-}$, the two generators are positive, $P_\pm>0$.
	  \item $U(x)\Om=\Om$ for all $x\in\Rl^2$.
	  \item $U(x)\NN U(x)^{-1}\subset\NN$ for $x_+>0$, $x_-<0$.
     \end{enumerate}
\end{definition}

We will here be mostly interested with certain modular data derived from a Borchers triple, and now recall the relevant notions (see \cite{Longo:2008} for details and proofs of the claims made here).

Given a Borchers triple, we consider the closed real subspace
\begin{align}
	K:=\{N\Om\,:\,N=N^*\in\NN\}^{\|\cdot\|}\subset\Hil
\end{align}
which is {\em standard} in the sense that $K+iK$ is dense in $\Hil$, and $K\cap iK=\{0\}$. To any such standard subspace one can associate a Tomita operator, that is the antilinear involution defined as 
\begin{align}
	S:K+iK\to K+iK\,,\qquad k_1+ik_2\mapsto k_1-ik_2\,.
\end{align}
This operator is densely defined, closed, and typically unbounded. Its polar decomposition $S=J\Delta^{1/2}$ gives rise to a positive non-singular linear operator $\Delta^{1/2}$ with domain $\dom\Delta^{1/2}=K+iK$ (the modular operator) and an anti unitary involution $J$ (the modular conjugation), satisfying $J\Delta^{it}=\Delta^{it}J$, $t\in\Rl$, and $J\Om=\Om$, $\Delta^{1/2}\Om=\Om$.

\medskip

The Hardy space (on a strip) that will arise later will be associated with the domain of the modular operator $\Delta^{1/2}$. As a first step towards this link, let us equip the dense subspace $\dom\Delta^{1/2}\subset\Hil$ with the graph norm, defined by the scalar product
\begin{align}\label{eq:graph-sp}
	\langle\psi,\varphi\rangle_\Delta
	:=
	\langle\psi,\varphi\rangle+\langle \Delta^{1/2}\psi,\Delta^{1/2}\varphi\rangle
	=
	\langle\psi,(1+\Delta)\varphi\rangle
	\,.
\end{align}
Since $\Delta^{1/2}$ is a closed operator, $\dom\Delta^{1/2}$ is closed in the graph norm \cite{ReedSimon:1972} $\|\cdot\|_\Delta=\langle\cdot,\cdot\rangle_\Delta^{1/2}$. That is, $(\dom\Delta^{1/2},\langle\cdot,\cdot\rangle_\Delta)$ is a complex Hilbert space. When considering $\dom\Delta^{1/2}$ with this scalar product, we refer to it as $\Dc$. 

The following proposition gathers basic information on the modular data from this point of view. 

\begin{proposition}\label{lemma:D}
Let $K$ be a closed real standard subspace and $\Dc$ the complex Hilbert space defined as $\dom\Delta^{1/2}$ with its graph scalar product \eqref{eq:graph-sp}.
     \begin{enumerate}[i)]
	  \item On $\Dc$, the Tomita operator $S$ is an anti unitary involution.
	  \item On $\Dc$, the modular unitaries $\Delta^{it}$, $t\in\Rl$, are still unitary.
	  \item For $0\leq\mu\leq\frac{1}{2}$, the modular operator $\Delta^{\mu}$ is a linear bounded operator of norm at most one when viewed as a map from $\Dc$ to $\Hil$.
	  \item The modular conjugation $J$ is an antiunitary operator $\Dc_\Delta\to\Dc_{\Delta^{-1}}$.
     \end{enumerate}
\end{proposition}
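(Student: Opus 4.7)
The plan is to derive all four statements directly from the polar decomposition $S=J\Delta^{1/2}$, the intertwining relation $J\Delta^{it}=\Delta^{it}J$ (whence, on the relevant domains, $J\Delta^{1/2}J=\Delta^{-1/2}$), and the spectral theorem for $\Delta$. In each case the verification reduces to checking that the map in question sends $\dom\Delta^{1/2}$ into the stated domain and is isometric (or a contraction) for the graph norm $\|\psi\|_\Delta^2=\|\psi\|^2+\|\Delta^{1/2}\psi\|^2$.

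For (i), since $S$ is an involution on $K+iK=\dom\Delta^{1/2}$, it automatically preserves $\Dc$ as a set. Using $\Delta^{1/2}J=J\Delta^{-1/2}$ one computes $\Delta^{1/2}S\psi=\Delta^{1/2}J\Delta^{1/2}\psi=J\psi$, and hence $\|S\psi\|_\Delta^2=\|J\Delta^{1/2}\psi\|^2+\|J\psi\|^2=\|\Delta^{1/2}\psi\|^2+\|\psi\|^2=\|\psi\|_\Delta^2$, so that $S$ is an antiunitary involution on $\Dc$. For (ii), $\Delta^{it}$ commutes with every bounded Borel function of $\Delta$, hence preserves $\dom\Delta^{1/2}$ and leaves each summand of $\|\cdot\|_\Delta^2$ invariant individually. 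For (iii), spectral calculus applied to the elementary inequality $\lambda^{2\mu}\le 1+\lambda$, valid for all $\lambda\ge 0$ and $0\le\mu\le\tfrac12$, yields $\|\Delta^\mu\psi\|^2=\int\lambda^{2\mu}\,d\langle E_\lambda\psi,\psi\rangle\le\|\psi\|^2+\|\Delta^{1/2}\psi\|^2=\|\psi\|_\Delta^2$, giving the claimed norm bound.

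For (iv), the identity $\Delta^{-1/2}J=J\Delta^{1/2}$ shows that $J$ maps $\dom\Delta^{1/2}$ bijectively onto $\dom\Delta^{-1/2}$ (bijectivity via $J^2=1$), with $\Delta^{-1/2}J\psi=J\Delta^{1/2}\psi$. Since $J$ is an antilinear isometry on $\Hil$, this gives $\|J\psi\|_{\Delta^{-1}}^2=\|J\psi\|^2+\|\Delta^{-1/2}J\psi\|^2=\|\psi\|^2+\|\Delta^{1/2}\psi\|^2=\|\psi\|_\Delta^2$, as required. The only technical point that genuinely requires care is the rigorous handling of the modular identity $J\Delta^{1/2}J=\Delta^{-1/2}$ at the level of domains, which is a standard consequence of $J\Delta^{it}J=\Delta^{-it}$ combined with the spectral theorem; once this is taken for granted, all four items fall out of the short computations above.
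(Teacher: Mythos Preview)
Your argument is correct and, for items i), ii), iv), follows the same line as the paper: the identities $\Delta^{1/2}S=J$ and $J\Delta^{1/2}=\Delta^{-1/2}J$ are precisely what the paper uses, the only cosmetic difference being that you verify norm preservation while the paper verifies preservation of the (sesquilinear) graph scalar product; the two are equivalent by polarization.

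The one genuinely different step is iii). The paper bounds $\|\Delta^\mu\psi\|$ via the three lines theorem applied to the strip-holomorphic function $z\mapsto\Delta^{-iz}\psi$, obtaining the interpolation estimate $\|\Delta^\mu\psi\|\le\|\psi\|^{1-2\mu}\|\Delta^{1/2}\psi\|^{2\mu}$ and then dominating the right-hand side by $\|\psi\|_\Delta$. You instead invoke the spectral theorem directly, using the elementary pointwise inequality $\lambda^{2\mu}\le 1+\lambda$ for $\lambda\ge0$ and $0\le\mu\le\tfrac12$. Your route is shorter and avoids complex analysis entirely; the paper's route yields the slightly sharper multiplicative interpolation bound as an intermediate step, but this extra information is not used later. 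Both arguments are standard, and your version is arguably the more economical one for the stated claim.
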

\begin{proof}
     {\em i)} Let $\psi,\varphi\in\Dc$. Then, since $S^*S=\Delta$ and $\Delta^{1/2}S=\Delta^{1/2}J\Delta^{1/2}=J$, 
     \begin{align*}
	  \langle S\psi,S\varphi\rangle_\Delta
	  &=
	  \langle S\psi,S\varphi\rangle+\langle \Delta^{1/2}S\psi,\Delta^{1/2}S\varphi\rangle
	  =
	  \overline{\langle \psi,\Delta\varphi\rangle}+\langle J\psi,J\varphi\rangle
	  \\
	  &=
	  \overline{\langle\Delta^{1/2}\psi,\Delta^{1/2}\varphi\rangle+\langle\psi,\varphi\rangle}
	  =
	  \overline{\langle\psi,\varphi\rangle_\Delta}\,.
     \end{align*}
     This shows that $S$ is anti unitary on $\Dc$.
     
     \noindent{\em ii)} This is clear because $\Delta^{it}$ commutes with $\Delta^{1/2}$.
     
     \noindent{\em iii)} For $\psi\in\Dc$, the $\Hil$-valued function $z\mapsto\Delta^{-iz}\psi$ is holomorphic on the strip $0<\im(z)<\frac{1}{2}$, continuous on its closure, and norm-constant in the real direction, $\|\Delta^{-i(x+iy)}\psi\|=\|\Delta^y\psi\|$. Thus the three lines theorem \cite{Conway:1978} implies, $0\leq y\leq\frac{1}{2}$,
     \begin{align*}
	  \|\Delta^y\psi\|^{1/2}\leq\|\psi\|^{1/2-y}\cdot\|\Delta^{1/2}\psi\|^y\,,
     \end{align*}
     from which we read off
     \begin{align*}
	  \|\Delta^y\psi\|
	  \leq
	  \|\psi\|^{1-2y}\cdot\|\Delta^{1/2}\psi\|^{2y}
	  \leq
	  \sqrt{\|\psi\|^2+\|\Delta^{1/2}\psi\|^2}
	  =
	  \|\psi\|_\Delta\,.
     \end{align*}
     \noindent{\em iv)} This follows from, $\psi,\varphi\in\dom(\Delta^{1/2})$,
     \begin{align*}
	  \langle J\psi,J\varphi\rangle_\Delta
	  &=
	  \langle J\psi,J\varphi\rangle+\langle\Delta^{1/2}J\psi,\Delta^{1/2}J\varphi\rangle  
	  \\
	  &=
	  \overline{\langle \psi,\varphi\rangle}+\overline{\langle\Delta^{-1/2}\psi,\Delta^{-1/2}\varphi\rangle}
	  =
	  \overline{\langle J\psi,J\varphi\rangle_{\Delta^{-1}}}
	  \,.
     \end{align*}
\end{proof}

To draw the connection to Hardy spaces, we recall a theorem of Borchers~\cite{Borchers:1992} on the commutation relation of the modular unitaries $\Delta^{it}$, $t\in\Rl$, and the translation unitaries $U(x)$, $x\in\Rl^2$: If $(\NN,U,\Om)$ is a Borchers triple, then there holds
\begin{align}\label{eq:BorchersCommutationRelations}
	\Delta^{it}U(x)\Delta^{-it}&=U(\La(t)x)\,,\qquad
	JU(x)J=U(-x)\,,\qquad x\in\Rl^2\,,
\end{align}
where $(\La(t)x)_\pm=e^{\mp 2\pi t}x_\pm$.

The commutation relations \eqref{eq:BorchersCommutationRelations} imply that in the presence of a Borchers triple, the operators $U(x)$, $\Delta^{it}$, and $J$ generate a (anti-)unitary strongly continuous representation of the proper Poincar\'e group $\PG_+=\SO(1,1)\rtimes\Rl^2$. We will denote this extended representation by the same letter~$U$.

As the basic building blocks, we are interested in the irreducible subrepresentations of~$U$. We call a representation degenerate if there exists a non-zero vector~$\Psi$ which is invariant under $U(x)$ for all $x\in\Rl^2$, i.e. $\Psi\in\ker P_+\cap\ker P_-$. In physical models, this vector can usually only be a multiple of $\Om$, so that we may restrict to non-degenerate representations of $\PG_+$.

The non-degenerate, irreducible, unitary, strongly continuous positive energy representations of $\PG_+$ can be classified up to unitary equivalence according to the joint spectrum of the generators $P_\pm$, denoted $\Sp(U|_{\Rl^2})$. There are equivalence classes of three types:
\begin{enumerate}
	\item[$m$)] $\Sp(U|_{\Rl^2})=\{p\in\Rl^2\,:\,p_\pm>0,\;\;p_+\cdot p_-=m^2\}$ for some $m>0$,
	\item[$0+$)] $\Sp(U|_{\Rl^2})=\{p\in\Rl^2\,:\,p_+\geq0,\,p_-=0\}$,
	\item[$0-$)] $\Sp(U|_{\Rl^2})=\{p\in\Rl^2\,:\,p_-\geq0,\,p_+=0\}$.
\end{enumerate}
The parameter $m$ has the physical interpretation of a mass. We therefore refer to representations of type $m>0$ as ``massive'', and to representations of type $0\pm$ as ``massless''. For a concise notation, we will adopt the convention to label objects by a single label $m$, which can either take positive values, referring to type $m)$, or the two special values $m=0\pm$, referring to type~$0\pm)$.

\medskip

The irreducible representation $U_m$ of type $m$ can be conveniently realized on the Hilbert space $\Hil=L^2(\Rl,d\te)$. In fact, we have for all types the same modular unitaries and conjugation \cite{LechnerLongo:2014}
\begin{align}\label{eq:mod-irrep}
     \Hil=L^2(\Rl,d\te)
     \,,\qquad
     (\Delta^{it}\psi)(\te)&= \psi(\te-2\pi t),
     \qquad
     (J\psi)(\te)=\overline{\psi(\te)}
     \,.
\end{align}
The translation operators are multiplication operators depending on the type, namely
\begin{align}\label{eq:rep-weights}
     (U_m(x)\psi)(\te) 
     &= 
     w_{m,x}(\te)\cdot\psi(\te)\,,
     \\
     w_{0\pm,x}(\te) &= e^{ix_\pm \,e^{\pm\te}}\,,\quad
     w_{m,x}(\te)= e^{im(x_+e^\te+x_-e^{-\te})}\,.
\end{align}
The functions $w_{m,x}$ will later serve as the weights of our weighted composition operators.

Considering the representation \eqref{eq:mod-irrep}, it becomes clear that the domain $\Dc\subset L^2(\Rl)$ of $\Delta^{1/2}$ consists of functions that have an analytic continuation to the strip region $\Strip_{0,\pi}$, a special case of the more general strip
\begin{align}\label{eq:Strip-pi}
     \Strip_{a,b}:=\{\zeta\in\Cl\,:\,a<\im\zeta<b\}\,,\qquad a<b\,,
\end{align}
and satisfy certain bounds on this strip. Before we make this precise, let us recall some properties of functions analytic in a strip, and corresponding function spaces.

\bigskip

Given $f\in\Hol(\Strip_{a,b})$ (the holomorphic functions $\Strip_{a,b}\to\Cl$), we write $f_\la$, $a<\la<b$, for its restriction to the line $\Rl+i\la$. Denoting the usual norm of $L^2(\Rl)$ by $\|\cdot\|_2$, we consider the norm
\begin{align}\label{eq:Hardy-Banach-Norm}
     \bno{f}_{\Strip_{a,b}}
     :=
     \sup_{a<\la<b}\|f_\la\|_2\in[0,+\infty]
\end{align}
and set 
\begin{align}
     H^2_B(\Strip_{a,b}):=\{f\in\Hol(\Strip_{a,b})\,:\,\bno{f}_{\Strip_{a,b}}<\infty\}\,.
\end{align}
We recall the following facts \cite{SteinWeiss:1971}:
\begin{enumerate}[{\em i)}]
     \item $(H^2_B(\Strip_{a,b}),\bno{\cdot}_{\Strip_{a,b}})$ is a Banach space.
     \item Any $f\in H^2_B(\Strip_{a,b})$ has $L^2$-boundary values on the two boundaries $\Rl+ia$ and $\Rl+ib$, i.e. $f_{a+\eps}$ and $f_{b-\eps}$ converge in $L^2(\Rl)$ as $\eps\searrow 0$. By a slight abuse of notation, we will denote these boundary values as $f_a$, $f_b\in L^2(\Rl)$. 
     \item As an expression of the maximum principle, the function $(a,b)\ni\la\mapsto\|f_\la\|_2$ is logarithmically convex for $f\in H^2_B(\Strip)$. In particular, $\bno{f}=\max\{\|f_a\|_2,\|f_b\|_2\}$.
\end{enumerate}

We will refer to $H_B^2(\Strip_{a,b})$ as {\em Hardy Banach space} to distinguish it from a Hardy Hilbert space on $\Strip_{a,b}$ to be introduced next.

Indeed, as the strip is an unbounded region, there exist two different types of Hardy Hilbert spaces for this domain: The conformally invariant Hardy space, defined in terms of harmonic majorants, and the not conformally invariant Hardy space, defined in terms of $L^2$-integrals over a sequence of Jordan curves tending to the boundary of the strip \cite[Ch.~10]{Duren:1970}. 

For our purposes, only the latter space will be relevant. It will be convenient to characterize it in terms of a Riemann map $\tau:\Disc\to\Strip$. (To lighten our notation, we write $\Strip$ instead of $\Strip_{a,b}$ when the boundaries of the strip are arbitrary.) Namely, we define
\begin{align}
	H^2(\Strip)
	:=
	\{f\in\Hol(\Strip)\,:\,\sqrt{\tau'}\cdot(f\circ\tau)\in H^2(\Disc)\}
	\,.
\end{align}
This is a Hilbert space with scalar product
\begin{align}
     \langle f,g\rangle_\Strip := \langle \sqrt{\tau'}\cdot(f\circ\tau),\,\sqrt{\tau'}\cdot(g\circ\tau)\rangle_\Disc
     \,,
\end{align}
and $(H^2(\Strip),\langle\cdot,\cdot\rangle_\Disc)$ depends on the choice of $\tau$ only up to changing the norm $\|f\|_\Strip:=\langle f,f\rangle_\Strip^{1/2}$ to an equivalent Hilbert norm. We may therefore fix $\tau$, and make the choice
\begin{align}
     \tau:\Disc\to\Strip_{a,b}\,,\quad \tau(z):=\frac{2(b-a)}{\pi}{\rm arctanh}(z)+\frac{i}{2}(a+b)\,.
\end{align}
This is a biholomorphic mapping $\tau:\Disc\to\Strip_{a,b}$, and elementary calculations show that it has inverse and derivative, $\zeta\in\Strip_{a,b}$, $z\in\Disc$,
\begin{align}\label{eq:tau-formulas}
     \tau^{-1}(\zeta)
     &=
     \tanh\left(\frac{\pi}{2(b-a)}\,\zeta-\frac{i\pi}{4}\frac{b+a}{b-a}\right)
     \,,\quad 
     \tau'(z)=\frac{2(b-a)}{\pi}\frac{1}{1-z^2}\,,
     \\
     &(\tau^{-1})'(\zeta)
     =
     \frac{\pi}{2(b-a)}\frac{1}{\cosh^2\left(\frac{\pi}{2(b-a)}\,\zeta-\frac{i\pi}{4}\frac{b+a}{b-a}\right)}
     \,.
     \label{eq:derivative-tau-inverse}
\end{align}

\newpage

\begin{proposition}\label{prop1}
     \begin{enumerate}[i)]
	  \item $H_B^2(\Strip)$ and $H^2(\Strip)$ coincide as linear spaces.
	  \item The two norms $\bno{\cdot}$ and $\|\cdot\|_{\Strip_{a,b}}$ are equivalent: For any $f\in\Hol(\Strip)$, there holds
	  \begin{align}
	       \frac{1}{\sqrt{2\pi}}\bno{f} \leq \|f\|_\Strip \leq \frac{1}{\sqrt{\pi}}\bno{f}\,.
	  \end{align}
	  \item The scalar product of $H^2(\Strip_{a,b})$ can be written as
	  \begin{align}\label{eq:strip-scalar-product}
	       \langle f,g\rangle_{\Strip_{a,b}}
	       =
	       \frac{1}{2\pi}\Big(
	       \langle f_a,g_a\rangle_2+\langle f_b,g_b\rangle_2
	       \Big)
	       \,,\qquad f,g\in H^2(\Strip_{a,b}).
	  \end{align}
     \end{enumerate}
\end{proposition}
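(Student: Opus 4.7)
I prove (iii) first; parts (i) and (ii) then fall out quickly. The core identity is
\[
\|f\|_\Strip^2 \;=\; \frac{1}{2\pi}\bigl(\|f_a\|_2^2+\|f_b\|_2^2\bigr),
\]
obtained by a change of variables under the Riemann map $\tau$. Since $\tau'(z)=\frac{2(b-a)}{\pi(1-z^2)}$ is nonvanishing on the simply connected disc, a holomorphic square root $\sqrt{\tau'}\in\Hol(\Disc)$ exists, and for $f\in\Hol(\Strip_{a,b})$ the pullback $g:=\sqrt{\tau'}\cdot(f\circ\tau)$ is holomorphic on $\Disc$. From the explicit formula for $\tau^{-1}$, $\tau$ sends the upper open semicircle of $\T$ bijectively onto $\Rl+ib$ and the lower onto $\Rl+ia$, with the exceptional points $\pm 1$ corresponding to the ``ends at infinity'' of the two lines. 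Parametrizing the top boundary by $u(t):=\tau^{-1}(t+ib)$ one finds $|du|=|\tau'(u)|^{-1}\,dt$, and analogously for the bottom.

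For $f\in H^2_B(\Strip_{a,b})$ the $L^2$-boundary values $f_a,f_b$ exist by the listed properties of $H^2_B$. A Fatou-type argument identifies the a.e.\ nontangential boundary values $g^*$ of $g\in H^2(\Disc)$ on $\T$ as $g^*(u(t))=\sqrt{\tau'(u(t))}\,f_b(t)$ on the upper arc, and analogously with $f_a$ on the lower. Substituting into $\|g\|_{H^2(\Disc)}^2=\int_\T|g^*|^2\,dm$, the factors $|\tau'(u)|$ from $|g^*|^2$ cancel with the factors $|\tau'(u)|^{-1}$ coming from $|du|$, yielding the displayed identity. Polarizing gives (iii).

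Part (ii) then follows at once by combining (iii) with the logarithmic convexity of $\la\mapsto\|f_\la\|_2$ stated in the preamble, which forces $\bno{f}=\max\{\|f_a\|_2,\|f_b\|_2\}$. From $\|f\|_\Strip^2=\frac{1}{2\pi}(\|f_a\|_2^2+\|f_b\|_2^2)$ one reads off directly
\[
\tfrac{1}{2\pi}\bno{f}^2\leq\|f\|_\Strip^2\leq\tfrac{1}{\pi}\bno{f}^2,
\]
which is (ii) after taking square roots. Part (i) is then immediate: if $f\in H^2_B(\Strip)$, (iii) gives $\|f\|_\Strip<\infty$, hence $f\in H^2(\Strip)$; conversely, $f\in H^2(\Strip)$ forces $\|f_a\|_2,\|f_b\|_2<\infty$ by (iii), and log-convexity then controls every intermediate line, so $f\in H^2_B(\Strip)$.

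The delicate step is the identification of $g^*$ with $\sqrt{\tau'}\cdot f_{a/b}$ near the singularities of $\tau'$ at $z=\pm 1$. The cleanest way to avoid confronting this singularity is to work first with the approximating integrals $\int_\T|g(ru)|^2\,dm(u)$ for $r<1$: after the change of variables $\zeta=\tau(ru)$ they become integrals over closed contours $\tau(r\T)$ sitting inside $\Strip$, which (by Cauchy's theorem, thanks to the holomorphy of $f$) one can deform onto pairs of horizontal segments approaching $\Rl+ia$ and $\Rl+ib$, the ``vertical'' connecting pieces being controlled by the Phragm\'en--Lindel\"of principle built into the $H^2_B$ framework. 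Letting $r\uparrow 1$ then gives the identity cleanly, without ever touching the singularity of $\tau'$.
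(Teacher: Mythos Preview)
Your overall strategy---prove (iii) directly by change of variables under $\tau$, then read off (ii) and (i)---is sound and genuinely different from the paper's. The paper first quotes a theorem of Bakan--Kaijser to obtain, on the symmetric strip $\Strip_{-1,1}$, the intermediate identity
\[
\|f\|_\Strip^2=\frac{1}{2\pi}\sup_{0\leq y<1}\bigl(\|f_y\|_2^2+\|f_{-y}\|_2^2\bigr),
\]
deduces (i) and (ii) from that, and only afterwards invokes log-convexity to identify the supremum with the boundary term, yielding (iii). Your Jacobian computation is also correct: the factor $|\tau'|$ coming from $|\sqrt{\tau'}|^2$ exactly cancels the one from $|du|=|\tau'(u)|^{-1}\,dt$, producing the clean $\frac{1}{2\pi}(\|f_a\|_2^2+\|f_b\|_2^2)$.

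The gap is in your final paragraph, precisely at the point you yourself flagged as delicate. After the substitution $\zeta=\tau(ru)$, the quantity $\int_\T|g(ru)|^2\,dm(u)$ becomes (up to a factor $\frac{1}{2\pi r}$) the \emph{arc-length} integral $\int_{\tau(r\T)}|f(\zeta)|^2\,|d\zeta|$. This is not the integral of a holomorphic $1$-form, so Cauchy's theorem gives you no license to deform $\tau(r\T)$ onto horizontal segments; the integrand $|f|^2$ is merely subharmonic, and subharmonic functions do not enjoy contour independence. Phragm\'en--Lindel\"of is not the right tool either: it delivers pointwise sup bounds, whereas what you need is a uniform $L^2$ arc-length bound over a family of curves stretching out to infinity as $r\to 1$.

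What actually closes this step is exactly the content the paper outsources to Bakan--Kaijser, or equivalently the Paley--Wiener description of $H^2_B(\Strip)$: write $f(\zeta)=\int_\Rl e^{i\zeta\xi}\hat f(\xi)\,d\xi$ with $\int_\Rl(e^{-2a\xi}+e^{-2b\xi})|\hat f(\xi)|^2\,d\xi<\infty$, and compute both sides by Plancherel. Either route supplies the missing uniform bound $\sup_{r<1}\int_\T|g(ru)|^2\,dm(u)<\infty$ and the identification of boundary values; once that is in place, your argument goes through.
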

\begin{proof}
     We first work on the special strip $\Strip$ given by $a=-1$, $b=1$, and introduce for $f\in\Hol(\Strip)$ the notation $s(f):=\frac{1}{2}\sup_{0\leq y<1}(\|f_y\|_2^2+\|f_{-y}\|_2^2)\in[0,+\infty]$.
     
     It was shown in \cite[Thm.~2.1 \& 2.2]{BakanKaijser:2007} that for $f\in\Hol(\Strip)$, one has $f\circ\tau\in H^2(\Disc)$ if and only if $s(w\cdot f)<\infty$, and in this case, $\|f\circ\tau\|_\Disc^2=s(w\cdot f)$, with the weight $w(\zeta)=(2\cosh\frac{\pi\zeta}{4})^{-1}$. 
     
     The condition that some $f\in\Hol(\Strip)$ lies in $H^2(\Strip)$, i.e. that $\sqrt{\tau'}(f\circ\tau)=(f/\sqrt{(\tau^{-1})'})\circ\tau\in H^2(\Disc)$, is therefore equivalent to $s(w/\sqrt{(\tau^{-1})'}\cdot f)<\infty$. But in view of \eqref{eq:derivative-tau-inverse}, $w(\zeta)/\sqrt{(\tau^{-1})'(\zeta)}=\pi^{-1/2}$. We thus have that $f\in\Hol(\Strip)$ lies in $H^2(\Strip)$ if and only if $s(f)<\infty$, and in this case, 
     \begin{align}\label{eq:norm-sup}
	  \|f\|_\Strip^2
	  =
	  \|\sqrt{\tau'}(f\circ\tau)\|_\Disc^2
	  =
	  \frac{1}{\pi}\,s(f)
	  =
	  \frac{1}{2\pi}\sup_{0\leq y<1}\left(\|f_y\|_2^2+\|f_{-y}\|_2^2\right)\,.
     \end{align}
     As the supremum on the right hand side clearly lies between $\bno{f}^2$ and $2\bno{f}^2$, the claimed equivalence of norms in $ii)$ follows. This also implies $i)$. 
     
     To establish $iii)$, we use that $(-1,1)\ni y\mapsto\|f_y\|_2$ is logarithmically convex for $f\in H_B^2(\Strip)$. Thus $y\mapsto\|f_y\|_2^2+\|f_{-y}\|_2^2$ is convex, which implies that the supremum in \eqref{eq:norm-sup} is taken for the boundary values at $y=1$, i.e. 
     \begin{align*}
	  \|f\|_\Strip^2
	  =
	  \frac{1}{2\pi} \left(\|f_1\|_2^2+\|f_{-1}\|_2^2\right)
	  =
	  \frac{1}{2\pi}\Big(\langle f_1,f_1\rangle_2+\langle f_{-1},f_{-1}\rangle_2\Big)
	  \,.
     \end{align*}
     This implies {\em iii)}.
     
     It remains to proceed from $\Strip_{-1,1}$ to a general strip $\Strip_{a,b}$ by means of the variable transformation $\varphi:\Strip_{-1,1}\to\Strip_{a,b}$, $\varphi(\zeta):=\frac{b-a}{2}\zeta+\frac{i}{2}(a+b)$. But by elementary substitutions, one finds that $f\mapsto f\circ\varphi$ is a bijection $H_B^2(\Strip_{a,b})\to H_B^2(\Strip_{-1,1})$, with $\|(f\circ\varphi)_y\|_2^2=\frac{2}{b-a}\|f_{(b-a)y/2}\|_2^2$. Since also $\|f\circ\varphi\|_{\Strip_{-1,1}}^2=\frac{2}{b-a}\|f\|_{\Strip_{a,b}}^2$, the properties {\em i)}--{\em iii)} follow from the special case $a=-1$, $b=1$.
\end{proof}

Given the action of the unitaries $\Delta^{it}$ \eqref{eq:mod-irrep} in the irreducible representations $U_m$ arising from the modular data of our Borchers triple, the scalar product \eqref{eq:strip-scalar-product} of $H^2(0,\pi)$ is strongly reminiscent of the graph scalar product \eqref{eq:graph-sp}. To make this match exact, we will use a rescaled version of the graph scalar product, namely
\begin{align}\label{eq:graph-sp2}
     \langle\psi,\varphi\rangle_\Delta
     :=
     \frac{1}{2\pi}\left(\langle\psi,\varphi\rangle+\langle \Delta^{1/2}\psi,\Delta^{1/2}\varphi\rangle\right)
     \,.
\end{align}
Clearly, Proposition~\ref{lemma:D} still holds with this equivalent scalar product. Moreover, we have the following concrete realization of $\Dc$.

\begin{proposition}
     Consider the modular data \eqref{eq:mod-irrep}, and denote by $\Dc$ the complex Hilbert space $\dom\Delta^{1/2}$ with scalar product \eqref{eq:graph-sp2}. 
     \begin{enumerate}[i)]
	  \item $\Dc=H^2(\Strip_{0,\pi})$ as complex Hilbert spaces.
	  \item The Tomita operator $S$ acts on $H^2(\Strip_{0,\pi})$ by ``crossing symmetry'', i.e. 
	  \begin{align}\label{eq:tomita-hardy}
	       (S\psi)(\zeta)
	       =
	       \overline{\psi(i\pi+\bar\zeta)}\,,\qquad \zeta\in\Strip_{0,\pi}\,.
	  \end{align}
     \end{enumerate}     
\end{proposition}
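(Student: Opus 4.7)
The central idea is analytic continuation of the modular unitaries. On the real line, $(\Delta^{it}\psi)(\theta)=\psi(\theta-2\pi t)$, so the map $t\mapsto\Delta^{it}\psi$ is, at least formally, the substitution of $-2\pi t$ into $\theta$. Continuing $t\mapsto -is/(2\pi)$ with $s$ purely imaginary suggests that for $\psi\in\dom\Delta^{1/2}$, the function
\[
    \tilde\psi(\theta+i\alpha):=(\Delta^{\alpha/(2\pi)}\psi)(\theta)\,,\qquad 0\le\alpha\le\pi\,,
\]
should be a holomorphic representative of $\psi$ on $\Strip_{0,\pi}$ with $\tilde\psi_0=\psi$ and $\tilde\psi_\pi=\Delta^{1/2}\psi$. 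The plan is to make this precise, identify the resulting function space with $H^2(\Strip_{0,\pi})$, and then compute $S$ by the same device.

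For part \emph{i)}, I would first take $\psi\in\dom\Delta^{1/2}$ and verify that $\tilde\psi$ is a well-defined element of $H^2_B(\Strip_{0,\pi})$. Holomorphy of the scalar function $\zeta\mapsto\tilde\psi(\zeta)$ on the open strip can be established most cleanly on the Fourier transform side, where $\Delta^{it}$ acts as multiplication by $e^{-2\pi itp}$ and $\Delta^{\mu}$ as multiplication by $e^{-2\pi\mu p}$, so that $\tilde\psi$ is the inverse Fourier transform of $e^{-2\pi\mu p}\hat\psi(p)$ and extends holomorphically in $\zeta=\theta+i\alpha$ via $\widehat{\tilde\psi_\alpha}(p)=e^{-\alpha p}\hat\psi(p)$. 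Boundedness of the $L^2$-norms $\|\tilde\psi_\alpha\|_2$ in $\alpha\in(0,\pi)$ follows from Proposition~\ref{lemma:D}~\emph{iii)}, and the boundary values in $L^2$ are $\psi$ at $\alpha=0$ and $\Delta^{1/2}\psi$ at $\alpha=\pi$ by strong continuity of $\alpha\mapsto\Delta^{\alpha/(2\pi)}\psi$ on $[0,\pi]$. This places $\tilde\psi$ in $H^2_B(\Strip_{0,\pi})=H^2(\Strip_{0,\pi})$ (Proposition~\ref{prop1}~\emph{i)}). Conversely, if $f\in H^2(\Strip_{0,\pi})$, then Fourier inversion shows that $e^{-\pi p}\widehat{f_0}(p)=\widehat{f_\pi}(p)\in L^2$, hence $f_0\in\dom\Delta^{1/2}$ with $\Delta^{1/2}f_0=f_\pi$; by $L^2$-uniqueness of boundary values, the assignment $\psi\leftrightarrow\tilde\psi$ is a bijection $\Dc\leftrightarrow H^2(\Strip_{0,\pi})$. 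Matching of scalar products is then immediate: the rescaled graph product \eqref{eq:graph-sp2} reads
\[
    \langle\psi,\varphi\rangle_\Delta
    =
    \frac{1}{2\pi}\bigl(\langle\tilde\psi_0,\tilde\varphi_0\rangle_2
    +\langle\tilde\psi_\pi,\tilde\varphi_\pi\rangle_2\bigr)\,,
\]
which coincides with $\langle\tilde\psi,\tilde\varphi\rangle_{\Strip_{0,\pi}}$ by Proposition~\ref{prop1}~\emph{iii)}.

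For part \emph{ii)}, I would just compute $S=J\Delta^{1/2}$ on the Hardy picture. On the real line, $(S\psi)(\theta)=\overline{(\Delta^{1/2}\psi)(\theta)}=\overline{\tilde\psi_\pi(\theta)}=\overline{\psi(\theta+i\pi)}$, and one checks directly that the function
\[
    \zeta\mapsto\overline{\psi(i\pi+\bar\zeta)}\,,\qquad\zeta\in\Strip_{0,\pi}\,,
\]
is holomorphic on $\Strip_{0,\pi}$ (its argument lies in $\Strip_{0,\pi}$, since $\im(i\pi+\bar\zeta)=\pi-\im\zeta\in(0,\pi)$), has boundary value $\overline{\psi(\theta+i\pi)}$ at $\alpha=0$ and $\overline{\psi(\theta)}$ at $\alpha=\pi$, and both boundary values lie in $L^2(\Rl)$. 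By the uniqueness of holomorphic extensions from $L^2$-boundary data, this must agree with the image of $S\psi$ under the identification from \emph{i)}, establishing \eqref{eq:tomita-hardy}.

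The only step requiring genuine care is the Fourier-side characterization of $\dom\Delta^{1/2}$ and its identification with $H^2(\Strip_{0,\pi})$; once that Paley--Wiener type equivalence is in hand, everything else (the norm identity and the crossing formula for $S$) is a short calculation, with the real-line boundary formula $(S\psi)(\theta)=\overline{\psi(\theta+i\pi)}$ essentially forcing \eqref{eq:tomita-hardy} by analytic continuation.
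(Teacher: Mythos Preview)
Your argument is correct and complete in outline, but it takes a genuinely different route from the paper. The paper does not build the identification $\Dc\leftrightarrow H^2(\Strip_{0,\pi})$ by analytically continuing $t\mapsto\Delta^{it}\psi$ or by a Paley--Wiener/Fourier argument; instead it invokes an external result (\cite[Lemma~A.1]{LechnerLongo:2014}) characterising the real standard subspace $K=\ker(1-J\Delta^{1/2})$ as the set of $\psi\in H^2_B(\Strip_{0,\pi})$ with $\overline{\psi_\pi(\theta)}=\psi_0(\theta)$, i.e.\ the crossing-symmetric functions \eqref{eq:char-H}. From this it reads off $\Dc=K+iK=H^2_B(\Strip_{0,\pi})$ directly, and for part~\emph{ii)} argues abstractly: $S$ is the unique antilinear involution fixing $K$ pointwise, and the map \eqref{eq:tomita-hardy} visibly has that property. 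Your approach is more self-contained (no black-box citation) and more computational, deriving the crossing formula from $S=J\Delta^{1/2}$ on the lower boundary and then continuing analytically; the paper's approach is shorter on the page but hides the analytic work inside the cited lemma, and in return makes the structural role of $K$ and the uniqueness characterisation of $S$ explicit. Both arrive at the same place; yours would stand on its own without the reference.
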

\begin{proof}
     It was shown in \cite[Lemma~A.1]{LechnerLongo:2014} that the real standard subspace $K=\ker(1-J\Delta^{1/2})$ is given by 
     \begin{align*}
	  K=\{\psi\in H_B^2(\Strip_{0,\pi})\,:\,\overline{\psi_{\pi}(\te)}=\psi_0(\te)\;\,a.e.\}\,.
     \end{align*}
     In view of the analyticity properties of $\psi$, this implies that $K$ consists exactly of those functions $\psi\in H_B^2(\Strip_{0,\pi})$ that satisfy
     \begin{align}\label{eq:char-H}
	  \overline{\psi(i\pi+\overline{\zeta})}=\psi(\zeta)\,,\qquad \zeta\in\Strip_{0,\pi}\,.
     \end{align}
     Clearly any $f\in H_B^2(\Strip_{0,\pi})$ can be written as $f=\psi+i\varphi$ with $\psi,\varphi\in K$, so that we see $H_B^2(\Strip_{0,\pi})=K+iK$. But as $S$ is an antilinear involution, with domain $\Dc=K+iK$, and $H_B^2(\Strip_{0,\pi})=H^2(\Strip_{0,\pi})$, it follows that $\Dc=H^2(\Strip_{0,\pi})$ as linear spaces. Also the graph scalar product \eqref{eq:graph-sp2} coincides with the scalar product of $H^2(\Strip_{0,\pi})$ by Prop.~\ref{prop1}~{\em iii)}. This shows {\em i)}.
     
     The Tomita operator $S$ is uniquely fixed by being an antilinear involution and $Sk=k$ for all $k\in K$. But in view of the characterization \eqref{eq:char-H} of $K$, it is clear that the antilinear involution defined in \eqref{eq:tomita-hardy} leaves $K$ pointwise invariant. This shows {\em ii)}.
\end{proof}

\bigskip

Having established the connection between modular data and Hardy spaces, we now explain how composition operators appear in this setting. 

Any Borchers triple defines a quantum field theory on $\Rl^2$ \cite{BuchholzLechnerSummers:2011}, which makes this concept interesting in the context of constructing models. However, the quantum field theories arising from Borchers triples might be pathological in the sense of containing no strictly local observables, a situation that arises when the inclusions $U(x)\NN U(x)^{-1}\subset\NN$, $x\in W$, have trivial relative commutants. These pathological situations can however be ruled out \cite{BuchholzLechner:2004,Lechner:2008} when the so-called {\em modular nuclearity condition} \cite{BuchholzDAntoniLongo:1990-1,BuchholzDAntoniLongo:1990} holds. This condition requires that the maps
\begin{align}\label{eq:Xi}
     \Xi_{x,\mu}:\NN\to\Hil\,,\qquad \Xi_{x,\mu}N:=\Delta^\mu U(x)N\Om\,,\qquad x\in W,\;0<\mu<\frac{1}{2}\,,
\end{align}
are {\em nuclear}\footnote{The condition that a linear map $X$ between two Banach spaces is nuclear is slightly weaker than $X$ having summable approximation numbers \cite{Pietsch:1972}.} as linear maps between the two Banach spaces $(\NN,\|\cdot\|_{\B(\Hil)})$ and $\Hil$. Whereas $\Xi_{x,\mu}$ is always bounded by modular theory, it is in general not compact, so that nuclearity of \eqref{eq:Xi} is a non-trivial requirement.

\bigskip

To investigate the approximation numbers of $\Xi_{x,\mu}$, we split this map as
\begin{align}\label{eq:splitting}
     \Xi_{x,\mu}:\NN\stackrel{Y}\longrightarrow\Dc\stackrel{U(x)}\longrightarrow\Dc\stackrel{\Delta^\mu}\longrightarrow\Hil
     \,,
\end{align}
where the first operator, defined as $Y(N):=N\Om$, is bounded: For any $N\in\NN$, we have 
\begin{align*}
     \|Y(N)\|_\Delta^2
     &=
     \|N\Om\|^2+\langle \Delta^{1/2}N\Om,\Delta^{1/2} N\Om\rangle
     =
     \|N\Om\|^2+\langle JN^*\Om,JN^*\Om\rangle
     \\
     &\leq
     2\|N\|^2,
\end{align*}
because $\|J\|=1$ and $\|N^*\|=\|N\|$. 

The last operator in \eqref{eq:splitting}, $\Delta^\mu$, is bounded as an operator $\Dc\to\Hil$ (see Prop.~\ref{lemma:D}~{\em iii)}).

\begin{lemma}\label{lemma:Ux}
     Let $(\NN,U,\Om)$ be a Borchers triple. Then the translations $U(x)$, $x\in W$, are isometries as maps on the Hilbert space $\Dc$.
\end{lemma}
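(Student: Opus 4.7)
The plan is to reduce the isometry statement to two pieces: $U(x)$ preserves the subspace $\Dc=\dom\Delta^{1/2}$, and it intertwines $\Delta^{1/2}$ with the unitary $U(-x)$ on $\Hil$. Once both are in hand, the graph-norm identity is immediate from the unitarity of $U(\pm x)$ on the ambient Hilbert space.

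First, I would show that $U(x)$ leaves the real standard subspace $K=\overline{\{N\Om\,:\,N=N^*\in\NN\}}$ invariant whenever $x\in W$. For $N=N^*\in\NN$ and $x\in W$, the operator $U(x)NU(x)^{-1}$ lies in $\NN$ by property~iii) of Definition~\ref{def:BT}, and it is self-adjoint because $U(x)^{-1}=U(x)^*$. Since $U(x)\Om=\Om$, we have $U(x)N\Om=(U(x)NU(x)^{-1})\Om\in K$, and invariance on all of $K$ follows by continuity. Consequently $U(x)$ maps $K+iK$ into itself, i.e.\ $U(x)\Dc\subset\Dc$, and on this domain the Tomita operator satisfies $SU(x)=U(x)S$ (just split $\psi=k_1+ik_2\in K+iK$ and use antilinearity of $S$ together with $Sk_j=k_j$).

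Second, I would use the polar decomposition $S=J\Delta^{1/2}$ together with Borchers' relation $JU(x)J=U(-x)$ from \eqref{eq:BorchersCommutationRelations} to turn the intertwining relation for $S$ into one for $\Delta^{1/2}$: for $\psi\in\Dc$,
\begin{align*}
\Delta^{1/2}U(x)\psi
= JSU(x)\psi
= JU(x)S\psi
= (JU(x)J)J S\psi
= U(-x)\Delta^{1/2}\psi.
\end{align*}
Since $U(-x)$ is unitary on $\Hil$, this yields $\|\Delta^{1/2}U(x)\psi\|=\|\Delta^{1/2}\psi\|$, and combined with the unitarity of $U(x)$ on $\Hil$ one obtains $\|U(x)\psi\|_\Delta=\|\psi\|_\Delta$ via the graph scalar product~\eqref{eq:graph-sp2}.

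The only delicate point is the self-adjointness of $U(x)NU(x)^{-1}$ that makes the $K$-invariance argument work; this is precisely where the wedge condition $x\in W$ and $U(x)^*=U(-x)=U(x)^{-1}$ enter. After that, everything is an algebraic manipulation on the dense domain $K+iK$, with no analytic continuation in the Borchers relation required, since we only need the already-established identity $JU(x)J=U(-x)$ and not its modular-unitary analogue at imaginary time.
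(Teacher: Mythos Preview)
Your argument is correct and reaches the key identity $\Delta^{1/2}U(x)\psi=U(-x)\Delta^{1/2}\psi$ by a somewhat different route than the paper. The paper invokes an external analytic-continuation result \cite[Thm.~2.3.1~f)]{Longo:2008} to the effect that $\Delta^{1/2}U(x)\Delta^{-1/2}=U(-x)$ on $\dom\Delta^{-1/2}$, and then writes $\Delta^{1/2}U(x)\psi=\Delta^{1/2}U(x)\Delta^{-1/2}\cdot\Delta^{1/2}\psi$. You instead argue internally: property~iii) of Definition~\ref{def:BT} gives $U(x)K\subset K$, hence $SU(x)=U(x)S$ on $K+iK$, and then $JU(x)J=U(-x)$ from \eqref{eq:BorchersCommutationRelations} converts this into the desired intertwining relation for $\Delta^{1/2}=JS$. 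Your approach is more self-contained (it uses only the structures and identities already set up in the paper), while the paper's is shorter because it outsources the work to the cited reference.

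One small remark: in your closing paragraph you identify the ``delicate point'' as the self-adjointness of $U(x)NU(x)^{-1}$. That self-adjointness is automatic from unitarity of $U(x)$ and $N=N^*$, independently of $x$; what actually requires $x\in W$ is the membership $U(x)NU(x)^{-1}\in\NN$ from Definition~\ref{def:BT}~iii). This does not affect the validity of your proof, only the commentary.
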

\begin{proof}
     It is known that the commutation relations \eqref{eq:BorchersCommutationRelations} imply that the operator $\Delta^{1/2}U(x)\Delta^{-1/2}$, $x\in W$, is defined on $\dom\Delta^{-1/2}$, and coincides there with $JU(x)J=U(-x)$ \cite[Thm. 2.3.1 f)]{Longo:2008}. We therefore find for $\psi\in\Dc$ and $x\in W$ the equation $\Delta^{1/2}U(x)\psi=\Delta^{1/2}U(x)\Delta^{-1/2}\Delta^{1/2}\psi=U(-x)\Delta^{1/2}\psi$, and consequently
     \begin{align*}
	  \|U(x)\psi\|_\Delta^2
	  &=\|U(x)\psi\|^2+\|\Delta^{1/2}U(x)\psi\|^2
	  \\
	  &=\|U(x)\psi\|^2+\|U(-x)\Delta^{1/2}\psi\|^2
	  =\|\psi\|_\Delta^2\,,
     \end{align*}
     where we have used that $U(x)$ is unitary on $\Hil$. This shows that $U(x)$ is an isometry on $\Dc$. Note that $U(x)$ is (except for trivial cases) not unitary because it does not have full range.
\end{proof}

The product of the last two operators in the split \eqref{eq:splitting},
\begin{align}\label{eq:Dxmu-ops}
     D_{x,\mu}:\Dc\to\Hil\,,\qquad D_{x,\mu}=\Delta^\mu U(x)\,,
\end{align}
can however be compact (and even have approximation number that go to zero quite fast), analogously to the situation encountered in Thm~\ref{thm:disc-in-disc}.

\medskip

To obtain estimates on the approximation numbers $a_n(D_{x,\mu})$, one splits $\Hil$ and $\Dc$ into irreducible subspaces of $U$. Then each subspace takes the form $\Hil=L^2(\Rl,d\te)$, $\Dc=H^2(\Strip_{0,\pi})$, and $U_m(x):\DD\to\DD$ acts by multiplication with the (analytic continuation of the) weight $w_{m,x}$ \eqref{eq:rep-weights}, depending on the representation type $m$. (Note that the analytically continued weight functions $w_{m,x}\in H^\infty(\Strip_{0,\pi})$ \eqref{eq:rep-weights} are bounded and inner, for any $m$ and $x$.)

Explicitly, the operator $D_{x,\mu}^{(m)}=D_{x,\mu}$ then takes the concrete form of a ``weighted restriction operator'', $0<\mu<\pi$, $x\in W$,
\begin{align}\label{eq:Dxmmu}
     D^{(m)}_{x,\mu}:H^2(\Strip_{0,\pi})&\to L^2(\Rl)\,,\\
     (D^{(m)}_{x,\mu}\psi)(\te)
     &:=
     w_{m,x}(\te+i\mu)\cdot \psi(\te+i\mu)
     \,.
\end{align}
This observation warrants a more systematic analysis of weighted restriction operators on Hardy spaces on strips. Before we enter into this analysis in the next section, let us comment on the relation between the operators $D^{(m)}_{x,\mu}$ and $\Xi_{x,\mu}$ \eqref{eq:Xi}.

Estimates on the approximation numbers of $D^{(m)}_{x,\mu}$ do not imply corresponding estimates on the maps $\Xi_{x,\mu}$ \eqref{eq:Xi}. To establish bounds on the $a_n(\Xi_{x,\mu})$, one also has to take into account the multiplicities occurring in the decomposition of $U$ into irreducibles. But for typical examples of Borchers triples, this analysis involves basically only (symmetrized) tensor powers of operators of the form $D^{(m)}_{x,\mu}$ \cite{AlazzawiLechner:2016}. For this reason, it is of interest to determine strong decay properties of the approximation numbers of $D^{(m)}_{x,\mu}$.

\subsection{Weighted restriction operators on $H^2(\Strip)$}

As a slight generalization of what appeared before, we consider here the following setting: Let~$\Strip\subset\Cl$ be a strip domain, which will be fixed in the following. To define the operators we want to study, we take a narrower strip $\tilde\Strip\subset\Strip$, such that the closure of the smaller strip is contained in the larger one, and a weight function $w\in H^\infty(\Strip)$. We are then interested in the mappings $f\mapsto (w\cdot f)|_{\tilde\Strip}$, considered as operators $H^2(\Strip)\to H^2(\tilde\Strip)$.

For simplicity, we will always assume that both $\Strip$ and $\tilde\Strip$ are symmetric around the real axis, i.e. $\Strip=\Strip_{-b,b}$ for some $b>0$ and $\tilde\Strip=\la\Strip$ for some $0<\la<1$. We then define
\begin{align}
     R_{w,\la}:H^2(\Strip)\to H^2(\la\Strip)\,,\qquad R_{w,\la}f:=(w\cdot f)|_{\la\Strip}\,.
\end{align}
As a limiting case as $\la\to0$, we also define
\begin{align}\label{eq:Rw0}
     R_{w,0}:H^2(\Strip)\to L^2(\Rl)\,,\qquad R_{w,0}:=(w\cdot f)_0
\end{align}
as the restriction of $wf$ to the real line. It is clear from Prop.~\ref{prop1}~$i)$ and the form of the norm \eqref{eq:Hardy-Banach-Norm} that the restriction maps $R_{1,\la}$, $0\leq\la<1$, with trivial weight $w=1$ are bounded. Since restriction of $f\in H^2(\Strip)$ to $\la'\Strip$ is the same as first restricting $f$ to $\la\Strip$, $\la>\la'$, and then to $\la'\Strip$, we find that there is a constant $c$ such that
\begin{align}\label{eq:anscale}
	a_n(R_{w,\la'})\leq c\,a_n(R_{w,\la})\,,\qquad 0\leq\la'\leq\la\leq1,\;n\in\Nl\,.
\end{align}
In particular, the operators $R_{w,0}$ mapping to $L^2(\Rl)$ \eqref{eq:Rw0} can be estimated in terms of the $R_{w,\la}$, $\la>0$. The latter operators map between Hardy spaces and can be reformulated as composition operators as follows.

Let $0<\la<1$ and
\begin{align}
     L_\la:H^2(\la\Strip)\to H^2(\Strip)\,,\qquad (L_\la f)(z):=\sqrt{\la}f(\la z)\,.
\end{align}
Taking into account that the Riemann maps $\tau_\la$ for $\la\Strip$ and $\tau$ for $\Strip$ are related by $\tau_\la=\la\tau$, it follows that $L_\la$ is unitary. Furthermore, the product $\la^{-1/2}\,L_\la R_{1,\la}$ is easily seen to be the composition operator $C_\la$ on $H^2(\Strip)$ with linear symbol $z\mapsto\la z$. This shows that the restriction operators $R_{1,\la}$ are unitarily similar to composition operators. Furthermore, we note that $C_\la$ (and thus $R_{1,\la}$) are {\em not} compact. This is so because on $H^2(\Strip)$ there exist no compact composition operators at all \cite{ShapiroSmith:2003}. The {\em weighted} operators $R_{w,\la}$ can however be compact, depending on the weight $w$, similar to the example in Thm.~\ref{thm:disc-in-disc}.

\medskip

For the following analysis of the approximation numbers of $R_{w,\la}$, we recall that $H^2(\Strip)$ is a reproducing kernel Hilbert space. Its kernel function $K_\Strip$ is related to the well-known Szeg\" o kernel \cite{Shapiro:1993} $K_\Disc(z,z')=(1-\overline{z}z')^{-1}$ of $H^2(\Disc)$ by 
\begin{align}
     K_\Strip(\zeta,\zeta')
     &=
     \sqrt{\overline{(\tau^{-1})'(\zeta)}}\cdot K_\Disc(\tau^{-1}(\zeta),\tau^{-1}(\zeta'))\cdot \sqrt{(\tau^{-1})'(\zeta')}\,.
\end{align}
To compute this explicitly for the strip $\Strip=\Strip_{-b,b}$, we insert \eqref{eq:tau-formulas} and get
\begin{align}
     K_\Strip(\zeta,\zeta')
     &=
     \frac{\pi}{4b}
     \frac{1}{\cosh\frac{\pi(\bar\zeta-\zeta')}{4b}}
     \,.
\end{align}
We also recall that given any orthonormal basis $\{\psi_n\}_n$ of $H^2(\Strip)$, we have $\sum_n\overline{\psi_n(\zeta)}\psi_n(\zeta')=K_\Strip(\zeta,\zeta')$. So, in particular,
\begin{align}\label{eq:K-sum}
     \sum_n|\psi_n(\zeta)|^2
     =
      \frac{\pi}{4b}
     \frac{1}{\cos\frac{\pi \,\im(\zeta)}{2b}}
     \,.
\end{align}

\begin{proposition}\label{prop2}
	Let $w\in H^\infty(\Strip)$ and $0\leq\la<1$.
     \begin{enumerate}[i)]
	  \item  If $w|_{\la\Strip}\in H^2(\la\Strip)$ (for $\la>0$) or $w_0\in L^2(\Rl)$ (for $\la=0$), then $R_{w,\la}$ is Hilbert-Schmidt, with Hilbert-Schmidt norm
	  \begin{align}
	       \|R_{w,\la}\|_2 = \sqrt{\frac{\pi}{4b\cos\frac{\pi\la}{2}}}\cdot\|w\|_{\la\Strip},\;\;\la>0,\qquad
	       \|R_{w,0}\|_2 = \sqrt{\frac{\pi}{4b}}\cdot\|w\|_2\,.
	  \end{align}
	  \item If $w$ is non-vanishing and rapidly decreasing in the sense that for any $k\in\Nl$, 
	  \begin{align}\label{eq:w-rapid}
	       \sup_{{\te\in\Rl}\atop{-\la b\leq \mu\leq\la b}}\left(|w(\te+i\mu)|\,(1+\te^2)^k\right)<\infty
	       \,,
	  \end{align}
	  then the approximation numbers of $R_{w,\la'
	  }$ satisfy for any $N\in\Nl$
	  \begin{align}
	       \sup_{n\in\Nl} \left(n^N a_n(R_{w,\la})\right) < \infty \,.
	  \end{align}
	  \item If $w(\te)\to c$, $c\neq0$, as $\te\to\infty$ or $\te\to-\infty$, then $R_{w,\la}$ is not compact.
     \end{enumerate}
\end{proposition}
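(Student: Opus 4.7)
The plan is a direct Hilbert--Schmidt computation. Picking any orthonormal basis $\{\psi_n\}_n$ of $H^2(\Strip)$ with $\Strip=\Strip_{-b,b}$, Proposition~\ref{prop1}~iii) gives
\[
\|R_{w,\la}\|_2^2=\sum_n\|R_{w,\la}\psi_n\|_{\la\Strip}^2=\frac{1}{2\pi}\sum_{\pm}\sum_n\int_{\Rl}|w(\te\pm i\la b)|^2\,|\psi_n(\te\pm i\la b)|^2\,d\te.
\]
A Tonelli exchange of sum and integral together with the reproducing-kernel identity \eqref{eq:K-sum} evaluates $\sum_n|\psi_n(\te\pm i\la b)|^2$ to the constant $\frac{\pi}{4b\cos(\pi\la/2)}$ (both lines contribute the same because $\cos$ is even). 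Invoking Proposition~\ref{prop1}~iii) once more, now applied to the weight itself, rewrites $\sum_{\pm}\int_\Rl|w(\te\pm i\la b)|^2d\te=2\pi\|w\|_{\la\Strip}^2$, and the stated formula falls out. The $\la=0$ case is identical in spirit, with $\la\Strip$ replaced by $\Rl$ and $\cos 0=1$.

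\textbf{Proof plan for ii).} The strategy is to write $R_{w,\la'}$ as a composition of many Hilbert--Schmidt operators and iterate part~i) via submultiplicativity of approximation numbers. Since $w$ is non-vanishing on the simply connected strip $\Strip$, I fix an analytic branch of $\log w$ and set, for every $M\in\Nl$, $w_M:=\exp(M^{-1}\log w)\in H^\infty(\Strip)$; then $w_M^M=w$ and $|w_M|=|w|^{1/M}$ continues to satisfy the decay condition \eqref{eq:w-rapid} on $\overline{\la\Strip}$. Interpreting $\la'<\la$ (the primed index in the proposition signals that the codomain is strictly smaller than the region of rapid decay of $w$), I fix a chain $\la'=\la_0<\la_1<\dots<\la_{M-1}<\la_M=\la$ and factor
\[
R_{w,\la'}=B_M\circ\dots\circ B_1\circ R_{1,\la},
\]
where $R_{1,\la}\colon H^2(\Strip)\to H^2(\la\Strip)$ is the bounded unweighted restriction and $B_k\colon H^2(\la_{M-k+1}\Strip)\to H^2(\la_{M-k}\Strip)$ is the weighted restriction by $w_M$. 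The key observation is that each $B_k$ evaluates $w_M$ on boundary lines $\im\zeta=\pm\la_{M-k}b$ with $\la_{M-k}<\la$, i.e.\ lines lying inside $\overline{\la\Strip}$ where the rapid-decay hypothesis forces $w_M\in L^2$. Part~i) applied to the ambient strip $\la_{M-k+1}\Strip$ therefore makes each $B_k$ Hilbert--Schmidt, and combining $a_n(B_k)\leq\|B_k\|_2/\sqrt{n}$ with the submultiplicativity $a_{Mn-M+1}(B_M\cdots B_1)\leq\prod_k a_n(B_k)$ yields $a_m(R_{w,\la'})=O(m^{-M/2})$. Since $M$ is arbitrary, this gives the claimed rapid decay. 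The delicate point is really the strict inequality $\la'<\la$: as $\la'\to\la$ the chain collapses and factors $\cos(\pi\la_{M-k}/(2\la_{M-k+1}))$ in the Hilbert--Schmidt constants blow up, so the bounds produced by my scheme are uniform in $n$ but not in $\la'$.

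\textbf{Proof plan for iii).} I will contradict compactness by exhibiting a bounded sequence $(\psi_n)\subset H^2(\Strip)$ which converges weakly to zero but has $\|R_{w,\la}\psi_n\|$ bounded below. Assume $w(\te)\to c\ne0$ as $\te\to+\infty$ (the other case is symmetric) and pick any nonzero $\psi\in H^2(\Strip)$. Setting $\psi_n(\zeta):=\psi(\zeta-n)$, Proposition~\ref{prop1}~iii) shows $\|\psi_n\|_\Strip=\|\psi\|_\Strip$; moreover the boundary values of $\psi_n$ on $\Rl\pm ib$ are translates of $\psi_{\pm b}\in L^2(\Rl)$, which are weakly null in $L^2(\Rl)$, so $\psi_n\rightharpoonup 0$ in $H^2(\Strip)$. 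Substituting into
\[
\|R_{w,\la}\psi_n\|_{\la\Strip}^2=\frac{1}{2\pi}\sum_{\pm}\int_{\Rl}|w(\te+n\pm i\la b)|^2|\psi(\te\pm i\la b)|^2\,d\te
\]
after the change of variable $\te\mapsto\te+n$, I upgrade the real-line hypothesis $w(\te)\to c$ to $w(\te+n\pm i\la b)\to c$ pointwise in $\te$ by a normal-family argument: the sequence $\zeta\mapsto w(\zeta+n)-c$ is uniformly bounded in $H^\infty(\Strip)$ and tends to zero on $\Rl$, so by Montel every subsequence has a further subsequence converging locally uniformly to a holomorphic function which vanishes on $\Rl$ and is hence identically zero. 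Dominated convergence, with dominator $\|w\|_\infty^2|\psi(\te\pm i\la b)|^2\in L^1(\Rl)$, then gives $\|R_{w,\la}\psi_n\|_{\la\Strip}\to|c|\,\|\psi\|_{\la\Strip}>0$, which is incompatible with compactness of $R_{w,\la}$.
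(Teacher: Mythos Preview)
Your proof is correct and follows essentially the same strategy as the paper in all three parts: part~i) is identical (orthonormal basis, Tonelli, reproducing-kernel identity~\eqref{eq:K-sum}); part~ii) uses the same core idea of factoring $R_{w,\la}$ into an arbitrary number of Hilbert--Schmidt pieces via $w=w_M^M$ and a chain of nested strips; part~iii) uses the same translate sequence $\psi_n(\zeta)=\psi(\zeta-n)$.

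There are two minor but worthwhile differences. In part~ii), the paper chooses intermediate strips \emph{outside} $\la\Strip$ (i.e.\ $\la<\la_{k-1}<\dots<\la_1<1$) and tacitly uses $w_j\in H^2(\Strip)$, whereas you place the chain \emph{inside} $\la\Strip$ and prepend the bounded map $R_{1,\la}$; your version is more faithful to the hypothesis, which only controls $|w|$ on $\overline{\la\Strip}$, at the price of proving the conclusion only for $\la'<\la$ (as you note explicitly). In part~iii), the paper reduces to $\la=0$ via~\eqref{eq:anscale} and then argues that $\{R_{w,0}\psi_n\}$ has no Cauchy subsequence by showing the vectors are asymptotically orthogonal with norms bounded below; you instead work directly at arbitrary $\la$, establish $\psi_n\rightharpoonup 0$, and use a Montel/normal-family argument to push the real-line limit $w(\te)\to c$ to the interior lines $\im\zeta=\pm\la b$. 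Both routes are sound; the paper's is slightly shorter because the reduction to $\la=0$ makes the convergence $w(\te+n)\to c$ immediate, while your Montel step buys you a self-contained argument that does not invoke~\eqref{eq:anscale}.
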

\begin{proof}
     {\em i)} Let $\{\psi_n\}_n$ be some orthonormal basis of $H^2(\Strip)$, and $0<\la<1$. Then, using \eqref{eq:K-sum},
     \begin{align*}
	  \sum_n\|R_{w,\la}\psi_n\|_{\la\Strip}^2
	  &=
	  \frac{1}{2\pi}\,\sum_n\int_\Rl d\te\left( |w_{-\la b}(\te)|^2|\psi_{n,-\la b}(\te)|^2+|w_{\la b}(\te)|^2|\psi_{n,\la b}(\te)|^2\right)
	  \\
	  &=
	  \frac{1}{8b}\,\int_\Rl d\te\left( \frac{|w(\te-i\la b)|^2}{\cos\frac{\pi \la}{2}}+\frac{|w(\te+i\la b)|^2}{\cos\frac{\pi \la}{2}}\right)
	  \\
	  &=
	  \frac{\pi}{4b\cos\frac{\pi\la}{2}}\cdot\|w\|_{\la\Strip}^2\,.
     \end{align*}
     Since $\|R_{w,\la}\|_2^2={\rm Tr}(R_{w,\la}^*R_{w,\la})$, this finishes the proof for $\la>0$. The argument for $\la=0$ is analogous.
     
     {\em ii)} By \eqref{eq:anscale}, it is sufficient to show the claim for $\la>0$. Let $k\in\Nl$. We find intermediate strip regions $\Strip\supset\la_1\Strip\supset\la_2\Strip_2\supset...\supset\la_{k-1}\Strip\supset\la\Strip$ such that in each inclusion, the closure of the smaller strip is contained in the larger strip. In view of the assumption on $w$, we may furthermore write our weight as a product $w=w_1\cdot w_2\cdots w_k$, with $w_1,...,w_k\in H^2(\Strip)$ (We can take $w_j:=w^{1/k}$ for $j=1,...,k$.) Thus our operator can be written as 
     \begin{align*}
	  R_{w,\la}=R_{\la/\la_{k-1},w_k}\cdot R_{\la_{k-1}/\la_{k-2},w_{k-1}}\cdots R_{\la_1,w_1}\,.
     \end{align*}
     By part $i)$, each of the $k$ factors is Hilbert-Schmidt. That is, $R_{w,\la}$ can be written as a product of an arbitrary number of Hilbert-Schmidt operators. This implies the claim by standard estimates on approximation numbers \cite{Pietsch:1972}.

     {\em iii)} We consider the case that $w(\te)\to c\neq0$ as $\te\to+\infty$, the opposite limit is analogous. By \eqref{eq:anscale}, it is sufficient to consider the case $\la=0$.
     
     For non-zero $ f \in H^2(\Strip)$, we consider the sequence $ f _n(\zeta):= f (\zeta-n)$. To show that $R_{w,0}$ is not compact, we show that $\{R_{w,0} f _n\}_n$ has no convergent subsequence. In fact, by dominated convergence we have
     \begin{align*}
	  \|R_{w,0} f _n\|_2^2
	  =
	  \int_\Rl d\te\,|w(\te)|^2| f (\te-n)|^2
	  \to c^2\| f _0\|^2_2
	  \,.
     \end{align*}
     Since $c\neq0$ and $ f \neq0$, this limit is non-zero, i.e. $\|R_{w,0} f _n\|_2\geq c_0>0$ for sufficiently large $n$. On the other hand, we have 
     \begin{align*}
	  |\langle R_{w,0} f _{n_1},R_{w,0} f _{n_2}\rangle_2|
	  \leq
	  \|w\|_\infty^2
	  \int_\Rl d\te\,| f (\te)|\cdot| f (\te+n_1-n_2)|\,,
     \end{align*}
     and this converges to $0$ for $n_1-n_2\to\infty$ by the falloff properties of Hardy space functions. Thus for large $n_1$, $n_2$, $|n_1-n_2|$, the vectors $R_{w,0} f _{n_1}$, $R_{w,0} f _{n_1}$ have approximately identical non-zero length and are approximately orthogonal to each other. Thus $\{R_{w,0} f _n\}_n$ can have no convergent subsequence.
\end{proof}

The situations described in item {\em ii)} and {\em iii)} of this proposition fit to the weights appearing in the massive and massless irreducible Poincar\'e representations, introduced in the previous section. To see this, we need to translate the weights $w_{m,x}\in H^\infty(\Strip_{0,\pi})$ \eqref{eq:rep-weights} to the symmetric strip $\Strip_{-\pi/2,\pi/2}$ by shifting their argument $\zeta\to\zeta+\frac{i\pi}{2}$. In the massive case $m>0$, this results in the weight (denoted by the same symbol)
\begin{align}\label{eq:m>0-weight}
     w_{m,x}(\zeta)
     =
     e^{m(-x_+e^\zeta+x_-e^{-\zeta})}
     \,,\quad 
     \zeta\in\Strip_{-\pi/2,\pi/2}\,.
\end{align}
Taking into account that $\pm x_\pm>0$, it is apparent that $w$ is rapidly decreasing on any strip of the form $\la\Strip$, $0<\la<1$. Thus, by part {\em ii)} of the preceding proposition, the approximation numbers of $R_{w,\la}$ are rapidly decreasing for any $0\leq\la<1$.

In the massless case, however, the weight becomes 
\begin{align}\label{eq:m=0-weight}
     w_{0\pm,x}(\zeta)=e^{\mp x_\pm \,e^{\pm\zeta}}\,,\quad \zeta\in\Strip_{-\pi/2,\pi/2}\,,
\end{align}
which converges to 1 as $\zeta\to\mp\infty$. Thus, by item {\em iii)}, the weighted restriction operator $R_{w,0}$ is not compact, which fits with the known result that in the massless case, $\Xi_{x,\mu}$ is not compact.

\bigskip

The estimates of Prop.~\ref{prop2}~$ii)$ are rather imprecise, and can be improved for specific weights $w$. To derive sharper bounds, let us switch to the unit disc~$\Disc$ and make contact with the setting of Section~\ref{Section:general}.

By the very definition of the scalar product $\langle\cdot,\cdot\rangle_\Strip$, the mapping
\begin{align}
     V:H^2(\Strip)\to H^2(\Disc)\,,\qquad V\psi := \sqrt{\tau'}\cdot(\psi\circ\tau)\,,
\end{align}
is unitary, with inverse $V^{-1}f=\sqrt{(\tau^{-1})'}\cdot(f\circ\tau^{-1})$. For the scaled strip $\la\Strip$, $0<\la<1$, we have the Riemann map $\tau_\la:=\la\tau$, with corresponding unitary $V_\la$. With the help of these unitaries, we can transfer the operators $R_{w,\la}$ to the Hardy space $H^2(\Disc)$ on the disc.

\medskip

Let us introduce some notation first. We define $\varphi_\la=g^{-1}\circ \gamma_\la\circ g$, with $0<\la<1$ and 
$$g(z)=\frac{1+z}{1-z},\quad \gamma_{\la}(u)=u^\la \hbox{\ for}\; \re\, u>0$$ as the lens map of parameter $\la$. Explicitly,
\begin{align}\label{eq:lensmap}
	\varphi_{\la}(z)=\frac{(1+z)^\la -(1-z)^\la}{(1+z)^\la +(1-z)^\la}\,.
\end{align}

\begin{lemma}
	The operator $R_{w,\la}$, $0<\la<1$, is unitarily similar to a weighted lens map composition operator $M_{\hat w}C_{\varphi_\la}$ on $H^2(\Disc)$, with lens map symbol $\varphi_\la$ \eqref{eq:lensmap}, and weight
	\begin{align}\label{eq:www}
		\hat{w}_\la(z):=\sqrt{\la}\,m_\la(z)\, w(\la\,\tau(z))
		\,,\qquad 
		m_{\la}(z)
		:=
		\left(\frac{1-\varphi_\la(z)^2}{1-z^2}\right)^{1/2}
		\,.
	\end{align}
\end{lemma}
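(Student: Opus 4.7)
The plan is a direct unitary-conjugation computation. I want to show that $V_\lambda\,R_{w,\lambda}\,V^{-1}=M_{\hat w_\lambda}C_{\varphi_\lambda}$ on $H^2(\Disc)$, where $V:H^2(\Strip)\to H^2(\Disc)$ and $V_\lambda:H^2(\la\Strip)\to H^2(\Disc)$ are the Riemann-map unitaries already introduced. Taking $f\in H^2(\Disc)$ and unwinding the three operators in turn, I would obtain
\begin{align*}
	(V_\lambda R_{w,\lambda}V^{-1}f)(z)
	=\sqrt{\la\,\tau'(z)}\;w(\la\tau(z))\;\sqrt{(\tau^{-1})'(\la\tau(z))}\;f\!\left(\tau^{-1}(\la\tau(z))\right),
\end{align*}
since $V^{-1}f=\sqrt{(\tau^{-1})'}\cdot(f\circ\tau^{-1})$, $R_{w,\la}$ multiplies by $w$ and restricts to $\la\Strip$, and $V_\lambda$ multiplies by $\sqrt{\tau_\la'}=\sqrt{\la\tau'}$ and precomposes with $\tau_\la=\la\tau$. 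The whole assertion then reduces to identifying the symbol of the resulting composition operator with $\varphi_\la$, and its multiplicative prefactor with $\hat w_\la$.

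For the symbol, I would use the explicit Riemann map for the symmetric strip $\Strip_{-b,b}$, which by \eqref{eq:tau-formulas} collapses to $\tau(z)=\tfrac{4b}{\pi}\arctanh(z)$ (since $a+b=0$) with inverse $\tau^{-1}(\zeta)=\tanh\!\bigl(\tfrac{\pi\zeta}{4b}\bigr)$. Hence
\begin{align*}
	\tau^{-1}(\la\tau(z))=\tanh\bigl(\la\arctanh(z)\bigr).
\end{align*}
Using $\arctanh(z)=\tfrac12\log g(z)$ with $g(z)=\tfrac{1+z}{1-z}$ and expanding the $\tanh$ as a ratio of exponentials, the right-hand side becomes $\tfrac{g(z)^\la-1}{g(z)^\la+1}=g^{-1}(\gamma_\la(g(z)))=\varphi_\la(z)$, which is the lens map \eqref{eq:lensmap}. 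Thus the composition symbol is exactly $\varphi_\la$, independently of~$b$.

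For the prefactor, I would differentiate the identity $\tau^{-1}\!\circ(\la\tau)=\varphi_\la$ by the chain rule to get $\la\,\tau'(z)\,(\tau^{-1})'(\la\tau(z))=\varphi_\la'(z)$, which collapses the two square-root factors in the display above into a single $\sqrt{\varphi_\la'(z)}$. Plugging in $\tau'(z)=\tfrac{4b}{\pi(1-z^2)}$ and $(\tau^{-1})'(\zeta)=\tfrac{\pi}{4b}\cosh^{-2}(\tfrac{\pi\zeta}{4b})$ from \eqref{eq:tau-formulas}, together with the identity $\cosh^{-2}(\arctanh\varphi_\la(z))=1-\varphi_\la(z)^2$, yields $\varphi_\la'(z)=\la\cdot\tfrac{1-\varphi_\la(z)^2}{1-z^2}=\la\,m_\la(z)^2$, so $\sqrt{\varphi_\la'(z)}=\sqrt{\la}\,m_\la(z)$. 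Substituting back, the computation becomes $(V_\lambda R_{w,\lambda}V^{-1}f)(z)=\sqrt{\la}\,m_\la(z)\,w(\la\tau(z))\,f(\varphi_\la(z))=\hat w_\la(z)\,(f\circ\varphi_\la)(z)$, which is $M_{\hat w_\la}C_{\varphi_\la}f$. Since $V$ and $V_\lambda$ are unitary, this gives the claimed unitary similarity.

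I do not anticipate a real obstacle; the two nontrivial identifications ($\tau^{-1}\circ(\la\cdot)\circ\tau=\varphi_\la$ and $\sqrt{\varphi_\la'}=\sqrt{\la}\,m_\la$) are both exact algebraic manipulations with $\arctanh$ and $\cosh$. The only point requiring a moment of care is the choice of branch of the square roots, which can be fixed consistently because $\tau'$, $(\tau^{-1})'$ and $\varphi_\la'$ are nowhere-vanishing holomorphic functions on $\Disc$ resp.\ $\Strip$ taking values in a simply connected region avoiding $0$.
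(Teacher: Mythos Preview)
Your proof is correct and follows essentially the same approach as the paper: conjugate $R_{w,\la}$ by the Riemann-map unitaries $V$ and $V_\la$, identify $\tau^{-1}(\la\tau(z))=\tanh(\la\arctanh z)=\varphi_\la(z)$, and simplify the Jacobian prefactor via the explicit form of $\tau'$. The only cosmetic difference is that you bundle the two square-root factors into $\sqrt{\varphi_\la'}$ via the chain rule before evaluating, whereas the paper rewrites $(\tau^{-1})'(\la\tau(z))=1/\tau'(\varphi_\la(z))$ first; both routes collapse to the same one-line identification $\sqrt{\la}\,m_\la$.
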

\begin{proof}
	The disc operator unitarily similar to $R_{w,\la}$ is $V_\la R_{w,\la}V^{-1}:H^2(\Disc)\to H^2(\Disc)$. Inserting the definitions yields
     \begin{align*}
	  (V_\la R_{w,\la}V^{-1}f)(z)
	  &=
	  \sqrt{\la\,\tau'(z)}\sqrt{(\tau^{-1})'(\la\,\tau(z))}\cdot w(\la\,\tau(z))\cdot f(\tau^{-1}(\la\,\tau(z)))
	  \\
	  &=
	  \sqrt{\la}\,\sqrt{\frac{\tau'(z)}{\tau'(\tau^{-1}(\la\,\tau(z)))}}\cdot w(\la\,\tau(z))\cdot f(\tau^{-1}(\la\,\tau(z)))
	  \,.
     \end{align*}
     We note that
     \begin{align}
		\tau^{-1}(\la\cdot\tau(z))
		&=
		\tanh(\la\cdot\arctanh(z))
		=
		\varphi_\la(z)
	\end{align}
	is the lens map. Inserting the explicit form of $\tau'$ \eqref{eq:tau-formulas} then yields the claimed result.
\end{proof}

The weight \eqref{eq:www} consists of the unbounded universal factor $m_\la$ (which diverges as $z\to\pm1)$ and the bounded factor $w\circ\la\tau$. For the factor $m_\la$, it is easy to show that 
\begin{align}\label{eq:m-bound}
	\left|m_\la(z)\right|
	\leq
	2
	\left|1-z^2\right|^{\frac{1}{2}(\la-1)}
	\,,\qquad
	z\in\Disc\,.
\end{align}

In the case of the ``massive weight'' \eqref{eq:m>0-weight}, we obtain after a short calculation
\begin{align}\label{eq:themassiveweight}
	w(\la\tau(z))
	\leq
	e^{-\big(s_+\,g(z)^{\la}+s_-\,g(-z)^{\la}\big)}
	\,,
\end{align}
with the parameters $s_\pm:=\frac{m\pi}{2}|x_\pm|>0$. Whereas this function decreases to zero quite fast as $z$ approaches $\pm1$, the two contact points of the lens map with the boundary of $\Disc$, the function $m_\la$ only diverges mildly at these points. For the purposes of estimating approximation numbers, we may therefore reduce the parameters $s_\pm$ a little to compensate the divergent factor $m_\la$, and consider weighted lens map composition operators with a weight of the form \eqref{eq:themassiveweight} instead. Such operators will be studied in the following section.

\bigskip

To conclude this section, we also transfer the operators $R_{w,0}$ \eqref{eq:Rw0}, corresponding to $\la=0$, to the disc. Whereas for $\la>0$, we obtained weighted composition operators on the disc, the case $\la=0$ corresponds to Carleson embeddings.

To see this, let us define on $\Disc$ the measure 
\begin{align}\label{eq:measure}
	\mu_w(x+iy):=|\hat w_1(x)|^2\delta(y) dx\,dy\,,\qquad w_1(z):=w(\tfrac{4b}{\pi}\arctanh z)\,,
\end{align}
supported on the real diameter of the disc.

\begin{lemma}
	Let $w\in H^\infty(\Strip_{-b,b})$.
	\begin{enumerate}
		\item The measure $\mu_w$ \eqref{eq:measure} is a Carleson measure, i.e. $H^2(\Disc)\subset L^2(\Disc,d\mu_w)$, and the embedding $J_w:H^2(\Disc)\hookrightarrow L^2(\Disc,d\mu_w)$ is bounded.
		\item The operator $R_{w,0}$ is unitarily similar to the embedding $J_w$.
	\end{enumerate}
\end{lemma}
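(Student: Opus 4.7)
The plan is to push everything to the unit disc via the Riemann map $\tau$, at which point both claims follow from a single change-of-variables identity.

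First I compute the relevant norm: for $\psi\in H^2(\Strip)$ set $f=V\psi\in H^2(\Disc)$, so that $f(x)=\sqrt{\tau'(x)}\,\psi(\tau(x))$ on the real diameter $(-1,1)$. Recalling that $\hat w_1(z)=w(\tau(z))$ (since $\varphi_1=\mathrm{id}$ and $m_1\equiv 1$), the definition of $\mu_w$ combined with the substitution $\theta=\tau(x)$, $d\theta=\tau'(x)\,dx$, gives
\begin{align*}
\|J_w f\|_{L^2(\mu_w)}^{2}
&=\int_{-1}^{1}|w(\tau(x))|^{2}|f(x)|^{2}\,dx
=\int_{-1}^{1}|w(\tau(x))|^{2}\tau'(x)\,|\psi(\tau(x))|^{2}\,dx\\
&=\int_{\Rl}|w(\theta)|^{2}|\psi(\theta)|^{2}\,d\theta
=\|R_{w,0}\psi\|_{L^{2}(\Rl)}^{2}.
\end{align*}
This identity is the engine of both items.

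For (1), I would use that $R_{w,0}$ is bounded: the trace $H^2(\Strip)\to L^2(\Rl)$, $\psi\mapsto\psi_0$, is bounded by Proposition~\ref{prop1} (logarithmic convexity yields $\|\psi_0\|_2\lesssim\|\psi\|_\Strip$), and $|w|\leq\|w\|_\infty$. Combining this with the identity above and the unitarity of $V$ yields $\|J_w f\|_{L^2(\mu_w)}\lesssim\|w\|_\infty\,\|f\|_\Disc$ for every $f\in H^2(\Disc)$, which is precisely the Carleson embedding inequality asserted in~(1).

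For (2), the same identity is equivalent to $(J_w V)^{*}(J_w V)=R_{w,0}^{*}R_{w,0}$. A polar-decomposition argument then produces a partial isometry $W_0$ from the closure of $\mathrm{ran}\,R_{w,0}$ onto the closure of $J_w(H^2(\Disc))$ with $W_0 R_{w,0}=J_w V$. Extending $W_0$ by any unitary between the two orthogonal complements (both are closed subspaces of separable Hilbert spaces, hence mutually isomorphic) produces a unitary $U\colon L^2(\Rl)\to L^2(\Disc,d\mu_w)$ satisfying $J_w = U\,R_{w,0}\,V^{-1}$, which is the claimed unitary similarity. The only real obstacle is the bookkeeping on these complements when $w$ has real zeros, where no canonical intertwiner exists and the auxiliary extension above is needed; for the inner weights $w_{m,x}$ motivated in Section~\ref{Section:modular} one has $|w|\equiv 1$ on $\Rl$ and this complication does not arise.
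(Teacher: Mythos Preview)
Your change-of-variables identity $\|J_wV\psi\|_{L^2(\mu_w)}=\|R_{w,0}\psi\|_{L^2(\Rl)}$ is correct and is in fact the substance of the paper's computation as well; the paper just packages it as the verification that a specific unitary conjugates $R_{w,0}$ into $J_w$.

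For part~(1) you take a different route from the paper. The paper estimates the Carleson boxes directly: since $\hat w_1$ is bounded and the measure is supported on the real diameter, $\mu_w[S(\xi,h)]\leq \|\hat w_1\|_\infty^2\,h$, and then invokes the Carleson embedding theorem. Your argument instead deduces boundedness of $J_w$ from boundedness of $R_{w,0}$ via the identity. Both are valid; yours is slightly more economical because it avoids quoting Carleson's theorem, while the paper's makes the Carleson-measure statement self-contained.

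For part~(2) there is a genuine gap. Your polar-decomposition argument produces an isometry $W_0$ from $\overline{\mathrm{ran}\,R_{w,0}}$ onto $\overline{\mathrm{ran}\,J_w}$, but the sentence ``both are closed subspaces of separable Hilbert spaces, hence mutually isomorphic'' is false as stated: closed subspaces of separable Hilbert spaces need not have the same dimension. What is true here, and what you would have to prove, is that \emph{both orthogonal complements are zero}, i.e.\ both $R_{w,0}$ and $J_w$ have dense range (for $w\not\equiv 0$). This holds because restrictions to $\Rl$ of $H^2(\Strip)$-functions are dense in $L^2(\Rl)$ and $w$ vanishes only on a Lebesgue-null set, and because polynomials are dense in $L^2((-1,1),|\hat w_1|^2dx)$; once established, $W_0$ is already unitary and no extension is needed. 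The paper sidesteps all of this by writing down explicit unitaries
\[
V_0:L^2(\Rl)\to L^2((-1,1),dx),\qquad (V_0\psi)(x)=\sqrt{\tau'(x)}\,\psi(\tau(x)),
\]
\[
\tilde V:L^2((-1,1),dx)\to L^2(\mu_w),\qquad (\tilde V g)(x)=g(x)/\hat w_1(x),
\]
and checking $\tilde V V_0 R_{w,0} V^{-1}=J_w$ directly. This is cleaner and does not require any density arguments.
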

\begin{proof}
	$i)$ Since the weight $\hat w_1$ is bounded on $\Disc$, we can estimate the measure of a Carleson box as, $\xi\in\mathbb T$, $0<h\leq1$,
	\begin{align*}
		\mu[S(\xi,h)]\leq \mu[S(1,h)]
		=
		\int_{1-h}^h|\hat w_1(x)|^2\,dx
		\leq
		\|\hat w_1\|_\infty^2\cdot h
	\end{align*}
	for $\re\,\xi\geq0$, and analogously for $\re\,\xi<0$. Thus $\mu$ is a Carleson measure (see Thm.~\ref{thm:carleson}), which implies the remaining statements in $i)$.
	
	$ii)$ We define the unitaries
	\begin{align}
		V_0:L^2(\Rl)\to L^2((-1,1),dx)
		\,,\qquad
		(V_0\psi)(x)
		:=
		\frac{\psi\left(\frac{4b}{\pi}\arctanh(x)\right)}{\sqrt{\frac{\pi}{4b}(1-x^2)}}
	\end{align}
	and
	\begin{align}
		\tilde V:L^2((-1,1),dx)\to L^2(\mu)
		\,,\qquad
		(\tilde V\psi)(x)
		:=
		\frac{\psi(x)}{\hat w_1(x)}\,.
	\end{align}
	Then inserting the definitions yields, $f\in H^2(\Disc)$,
	\begin{align*}
		(\tilde VV_0R_{w,0}V^{-1}f)(x)
		&=
		\frac{1}{\hat w_1(x)}\frac{w(\frac{4b}{\pi}\arctanh(x))\,(V^{-1}f)(\frac{4b}{\pi}\arctanh(x))}{\sqrt{\frac{\pi}{4b}(1-x^2)}}
		\\
		&=
		f(x)\,.
	\end{align*}
\end{proof}

\section{Weighted lens map composition operators}\label{Section:lensmaps}

We now turn to weighted {\em lens map} composition operators on $H^2(\Disc)$ as particular examples of weighted composition operators. As in the previous section, we define the analytic map $\varphi=\varphi_\lambda=g^{-1}\circ \gamma_\lambda\circ g$, with $0<\lambda<1$ and 
$$g(z)=\frac{1+z}{1-z},\quad \gamma_{\lambda}(u)=u^\lambda \hbox{\ for}\; \re\, u>0$$ as the lens map of parameter $\lambda$. Explicitly,
$$\varphi_{}(z)=\frac{(1+z)^\lambda -(1-z)^\lambda}{(1+z)^\lambda +(1-z)^\lambda}\cdot$$

\noindent Recall that the image $\varphi(\D)\subset \D$ has exactly two non-tangential contact points with the unit circle at $1$ and $-1$, and in particular verifies $\Vert \varphi\Vert_\infty=1$. Also  observe that $\varphi(-z)=-\varphi(z)$ for all $z\in \D$.\\

We choose a weight of the form motivated by the applications outlined in the previous section, namely $w=w_0\circ \varphi$ where \footnote{In comparison to \eqref{eq:themassiveweight}, we have set the inessential parameters $s_\pm$ to 1.}
\begin{equation}\label{choix}w_{0}(z)=\exp\Big[-\Big(\frac{1+z}{1-z}\Big)^\lambda\Big]\exp\Big[-\Big(\frac{1-z}{1+z}\Big)^\lambda\Big]=:w_{1}(z)w_{-1}(z)\,\end{equation}

We have $\max(\Vert w_1\Vert_\infty, \Vert w_{-1}\Vert_\infty)\leq 1$, and note that  $w_0$ tends to $0$ quite rapidly as $z\to\pm1$. Namely, if $z\in \D$:
\begin{equation}\label{majw1} \re\, z\geq 0 \Rightarrow|w_{0}(z)|\leq  |w_{1}(z)|\leq \exp\Big(-\frac{\delta}{|1-z|^\lambda}\Big)\end{equation}
\begin{equation}\label{majw2} \re\, z\leq 0 \Rightarrow|w_{0}(z)|\leq  |w_{-1}(z)|\leq \exp\Big(-\frac{\delta}{|1+z|^\lambda}\Big)\end{equation}
where $\delta=\cos (\lambda\pi/2)$. Indeed, if for example $\re\, z \geq 0$, we see that 
$$\re \Big(\frac{1+z}{1-z}\Big)^\lambda\geq \delta\Big|\frac{1+z}{1-z}\Big|^\lambda\hbox{\ with}\ |1+z|\geq \re(1+z)\geq 1.$$
We take for $w_0$ this double product to take both contact points $\pm 1$ of $\varphi(\D)$ with the unit circle into account. We note in passing (this is general) that 
$$C_\varphi M_{w_0}=M_{w_{0}\circ \varphi}\,C_\varphi=M_w C_\varphi.$$

We can now state one of our main  theorems. The positive constants $0<c<C$ are allowed to change from one line to another in what follows.

\begin{theorem}\label{onemain} Let   $T=M_w C_\varphi:H^2\to H^2$ where $w$ and $\varphi$ are as above. Then $T$ is compact, and more precisely:\begin{enumerate}
\item One nearly has exponential decay, namely
 $a_{n}(T)\leq Ce^{-c\frac{n}{\log n}}$.
\item The previous estimate is optimal: $a_{n}(T)\geq c\,e^{-C\frac{n}{\log n}}$.
\end{enumerate}
\end{theorem}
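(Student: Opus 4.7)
The plan is as follows. First, I note that a direct application of Theorem~\ref{geup} to $T = M_w C_{\varphi_\lambda}$ yields only $a_n(T) \lesssim e^{-cn^{\lambda/(1+\lambda)}}$: from $|1-\varphi_\lambda(e^{it})| \asymp |t|^\lambda$ one may take $\omega(t) = ct^\lambda$, obtaining $\delta_{w_0}(h) \lesssim \exp(-c/h^\lambda)$, and balancing $e^{-nh}$ against this gives precisely that rate. Both directions of Theorem~\ref{onemain} therefore require refinements beyond Theorem~\ref{geup}, but remain structurally analogous to the Bernstein/Gelfand-type estimates of Section~\ref{Section:general}.

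For the lower bound, I would apply Lemma~\ref{berber} with a sequence $u_j = r_n e^{i\theta_j}$, $1 \leq j \leq n$, where $r_n := 1 - 1/\log n$ and the angles $\theta_j$ are equispaced in an arc $[a,b] \subset (0,\pi)$ bounded away from $0$ and $\pi$. The images $v_j = \varphi_\lambda(u_j)$ then lie in a compact subset of $\varphi_\lambda(\D)$ away from the contact points $\pm 1$, so $\inf_j |w(u_j)| = \inf_j |w_0(v_j)| \geq c_1 > 0$ and $\inf_j \sqrt{1-|u_j|^2} \geq c_2/\sqrt{\log n}$. The crucial step is to bound the interpolation constant $I_v$: a direct pseudo-hyperbolic computation at the chosen scale gives consecutive separations $\rho(v_j, v_{j+1}) \asymp \lambda (\log n)/n$, and summing $\log \rho(v_k, v_j)$ over $k \neq j$ yields $\prod_{k\neq j} \rho(v_k, v_j) \gtrsim e^{-C n/\log n}$. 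This gives $I_v \lesssim e^{C n/\log n}$ by standard estimates, and plugging the three quantities into Lemma~\ref{berber} produces the claimed lower bound.

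For the upper bound, in place of $z^N H^2$ I would use the model subspace $B_n H^2$ of codimension~$n$, where $B_n$ is the finite Blaschke product with zeros at a companion sequence $(\zeta_j)_{j=1}^n \subset \varphi_\lambda(\D)$, again scaled so that consecutive pseudo-hyperbolic separations are of order $(\log n)/n$. For $f = B_n g$ with $\|f\| = 1$, one expands
\[
 \|Tf\|^2 = \int_\T |w(u)|^2 \, |B_n(\varphi_\lambda(u))|^2 \, |g(\varphi_\lambda(u))|^2 \, dm(u)
\]
and splits the integration into a \emph{core} region, where $\varphi_\lambda(u)$ stays away from $\pm 1$ and $|B_n \circ \varphi_\lambda|^2$ is uniformly controlled by $e^{-2cn/\log n}$ thanks to the density of the zeros, and two \emph{boundary} regions near $\pm 1$, where the weight $|w|$ is super-polynomially small by \eqref{majw1}--\eqref{majw2}. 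Both contributions are of order $e^{-2cn/\log n}$, and the Gelfand characterization \eqref{gelf} gives the claimed upper bound.

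The step I expect to be the main obstacle is the uniform control of $|B_n \circ \varphi_\lambda|$ on the boundary curve $\varphi_\lambda(\T)$, which amounts to estimating a finite Blaschke product at boundary points rather than at the interior points where it is designed to vanish; I would handle this via a Green-capacity / subharmonic-majorant argument in the spirit of the two Widom-type results invoked in the proof of Theorem~\ref{extension}.
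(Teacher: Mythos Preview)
Your lower-bound scheme has a genuine gap. By placing $u_j=r_ne^{i\theta_j}$ with $\theta_j$ in a fixed arc $[a,b]\subset(0,\pi)$, you force the images $v_j=\varphi_\lambda(u_j)$ to lie in a \emph{fixed compact} $K\subset\D$ (since $\varphi_\lambda(e^{i\theta})$ stays strictly inside $\D$ for $\theta$ away from $0,\pi$). But then there is a constant $\rho_{\max}=\rho_{\max}(K)<1$ with $\rho(v_j,v_k)\leq\rho_{\max}$ for \emph{all} pairs, so
\[
\prod_{k\neq j}\rho(v_j,v_k)\;\leq\;\rho_{\max}^{\,n-1},
\qquad\text{hence}\qquad I_v\;\geq\;\rho_{\max}^{-(n-1)}\,.
\]
This is exponential in $n$, not $e^{Cn/\log n}$; your claimed estimate $\prod_{k\neq j}\rho(v_k,v_j)\gtrsim e^{-Cn/\log n}$ is impossible in a compact set. (Put differently: in your sum $\sum_{k\neq j}\log\rho(v_k,v_j)$, all but $O(n/\log n)$ terms are at most $\log\rho_{\max}<0$, so the sum is $\leq -cn$.) Plugging into Lemma~\ref{berber} you recover only $a_n(T)\gtrsim e^{-Cn}$, i.e.\ Theorem~\ref{saho}. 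The missing idea is that, to beat exponential, the $v_j$ \emph{must} approach $\partial\D$. The paper takes $u_j=1-e^{-j\varepsilon}$ on the real segment (so $v_j\to 1$), uses the known lens-map estimate $I_v\leq e^{C/\varepsilon}$, and then balances the three factors in Lemma~\ref{berber}: $\inf_j|w(u_j)|\geq\exp(-Ce^{n\varepsilon})$, $\inf_j\sqrt{1-|u_j|^2}\geq ce^{-n\varepsilon/2}$, $I_v^{-2}\geq e^{-C/\varepsilon}$. The choice $\varepsilon=\tfrac12(\log n)/n$ makes the last factor dominant and yields $a_n\gtrsim e^{-Cn/\log n}$.

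Your upper-bound outline is in the right spirit (Blaschke model subspace, split into a ``core'' handled by the Blaschke factor and ``tips'' handled by the weight), and this is indeed what the paper does. But the paper's implementation differs from what you sketch in two respects. First, the Blaschke product is not built from $n$ simple zeros with separation $(\log n)/n$; rather one takes $\sim\log N$ \emph{distinct} points $p_k=\varphi_\lambda(e^{it_k})$ with $t_k=\tfrac{\pi}{2}2^{-(k-1)/\lambda}$ (a geometric sequence), each repeated with multiplicity $N$, together with their reflections, giving a product of degree $\lesssim N\log N$. Second, the control of $|B\circ\varphi_\lambda|$ on the core region is not obtained via Green capacity or a Widom-type argument but via an elementary pointwise lemma: one shows directly that $|B_1(\gamma(t))|\leq\chi^N$ for $t_{[\log N]}\leq t\leq t_1$ with some $\chi<1$ depending only on $\lambda$. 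The split itself is done through a Carleson-embedding estimate in the parameter $h$ (Case~1: $h$ small, bound $|B|\leq1$ and use the weight; Case~2: $h$ moderate, bound $|w|\leq1$ and use the Blaschke lemma), which avoids the boundary-control obstacle you anticipate.
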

\newpage
\begin{proof} We begin with the upper bound, following the strategy of \cite{LefevreLiQueffelecRodriguez-Piazza:2012}. If $\gamma(t)=\varphi_{}(e^{it})$, one easily checks \cite[Lemma~2.5]{LefevreLiQueffelecRodriguez-Piazza:2012} that 
\begin{equation}\label{chec1}|t|\leq \frac{\pi}{2}\Rightarrow 1-|\gamma(t)|\approx|1-\gamma(t)|\approx |t|^\lambda\end{equation}
\begin{equation}\label{chec2}\frac{\pi}{2}\leq |t|\leq \pi \Rightarrow 1-|\gamma(t)|\approx|1+\gamma(t)|\approx (\pi-|t|)^\lambda.\end{equation} 
We fix an integer $N\geq 2$. Let $B$ be the Blaschke product 
$$B(z)=\prod_{1\leq k\leq \log N} \Big(\frac{z-p_k}{1-\overline{p_{k}}z}\,\frac{z-\overline{p_k}}{1-p_{k}z}\Big)^N\,\prod_{1\leq k\leq \log N}\Big(\frac{z+p_k}{1+\overline{p_{k}}z}\,\frac{z+\overline{p_k}}{1+p_{k}z}\Big)^N$$$$=:B_{1}(z)B_{-1}(z)$$
where $p_k=\gamma(t_k)$ and $t_k=\frac{\pi}{ 2}2^{-(k-1)/\lambda}$. This is a Blaschke product of length $\leq 4N\log N$. Observe that $\overline{B_{\pm1}(z)}=B_{\pm1}(\overline{z})$. Next, set $E=BH^2$,  a subspace of codimension $<[4N\log N+1]=:M$ (with $[.]$ the integer part). If $f=Bg\in E$, with $\Vert f\Vert=\Vert g\Vert=1$, we have (remembering (\ref{alt}) and since $w=w_0\circ\varphi$)
$$\Vert T(f)\Vert^2=\int_{\T}|w_{0}(\varphi(u))|^2|B(\varphi(u))|^2 |g(\varphi(u))|^2 dm(u)= \int_{\D}|w_0|^2 |B|^2|g|^2 dm_{\varphi}.$$
 Now,  using again Carleson's embedding theorem for the measure 
 $$\mu=|w_0|^2|B|^2dm_{\varphi},$$ as in the proof of Theorem \ref{geup}, and the non-tangential behavior of $\varphi$ near the points $\pm 1$ (allowing us to ignore the Carleson windows centered elsewhere than in $\pm 1$ as in \cite[p.~809]{LefevreLiQueffelecRodriguez-Piazza:2012}), we get with help of (\ref{gelf}):
 $$a_{M}(T)^2\lesssim \sup_{0<h<1}\Big(\frac{I_{1}(h)}{h}+\frac{I_{-1}(h)}{h}\Big)$$ where
\begin{align*}
	I_{1}(h)
	&=
	\int_{|\gamma(t)-1|\leq Ch} |B_{1}(\gamma(t))|^2|w_{1}(\gamma(t))|^2dt\\
	I_{-1}(h)
	&=\int_{|\gamma(t)+1|\leq Ch} |B_{-1}(\gamma(t))|^2|w_{-1}(\gamma(t))|^2dt. 
\end{align*}
Actually, the term $I_{1}(h)$ takes care of the Carleson boxes $S(\xi,h)$ for which $\re\, \xi\geq 0$ and the term $I_{1}(h)$ of those for which $\re\, \xi< 0$. We will estimate only $I_{1}(h)$, the estimate being similar for $I_{-1}(h)$.\\
  By interpolation, we can assume $h=2^{-q/\lambda}$ with $q$ a non-negative integer, and we separate two cases:
 
\medskip

\noindent {\bf Case 1: $q>\log N$}. Then,  majorizing $|B_{1}(\gamma(t))|$ by $1$  and using (\ref{majw1}) as well as (\ref{chec1}), we get  
$$h^{-1}I_{1}(h)\lesssim h^{-1}\int_{|t|^\lambda\leq Ch}|w_{1}(\gamma(t))|^2dt\lesssim h^{-1}\int_{|t|^\lambda\leq Ch}\exp\Big(-\frac{C}{|t|^{\lambda^2}}\Big)dt$$$$\lesssim h^{\frac{1}{\lambda}-1}\exp\Big(-\frac{C}{h^\lambda}\Big)\lesssim \exp(-C2^{q})\lesssim \exp(-CN).$$
{\bf Case 2: $q\leq \log N$}. Then, we majorize $|w_{1}(\gamma(t))|$ by $1$ and estimate $|B_{1}(\gamma(t))|$ more accurately, with help of an obvious modification of Lemma~2.6 in \cite{LefevreLiQueffelecRodriguez-Piazza:2012}, which we recall:
\begin{lemma}\label{recal} Set\  $t_k=\frac{\pi}{ 2}2^{-(k-1)/\lambda}$. Then 
$$t_{[\log N]}\leq t\leq t_1\Rightarrow |B_{1}(\gamma(t))|\leq \chi^N$$ where $[.]$ denotes the integer part and where  $\chi<1$ only depends on $\lambda$.
\end{lemma}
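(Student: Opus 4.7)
The plan is to factor $B_1(z)=\tilde B(z)^N$, where
$$\tilde B(z):=\prod_{1\leq k\leq \log N}\frac{z-p_k}{1-\overline{p_{k}}z}\cdot\frac{z-\overline{p_k}}{1-p_{k}z}\,.$$
Each individual Mobius factor has modulus at most $1$ on $\D$, so it suffices to exhibit $\chi_0\in(0,1)$, depending only on $\lambda$, such that for every $t\in[t_{[\log N]},t_1]$ \emph{at least one} of the $2[\log N]$ factors, evaluated at $\gamma(t)$, has modulus at most $\chi_0$. Bounding all others by $1$ then gives $|\tilde B(\gamma(t))|\leq \chi_0$, and raising to the $N$-th power yields $|B_1(\gamma(t))|\leq \chi_0^{\,N}$, which is the claim with $\chi=\chi_0$.

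Given $t$ in the range, I would pick the unique index $k\in\{2,\ldots,[\log N]\}$ with $t_k\leq t\leq t_{k-1}$, noting $t_{k-1}/t_k=2^{1/\lambda}$, so $t\asymp t_k$ with constants depending only on $\lambda$. The key identity is
$$1-\Bigl|\tfrac{\gamma(t)-p_k}{1-\overline{p_{k}}\gamma(t)}\Bigr|^{2}=\frac{(1-|\gamma(t)|^{2})(1-|p_k|^{2})}{|1-\overline{p_{k}}\gamma(t)|^{2}},$$
so I only need a uniform lower bound on the right-hand side. Using the expansion $\gamma(s)=1-c_\lambda\,e^{-i\lambda\pi/2}s^{\lambda}+O(s^{2\lambda})$ at the contact point $1$ (which is consistent with, and locally a refinement of, (\ref{chec1})), one reads off for $t_k\leq t\leq t_{k-1}$ the comparisons
$$1-|\gamma(t)|^{2}\asymp t_k^\lambda,\qquad 1-|p_k|^{2}\asymp t_k^\lambda,$$
and
$$1-\overline{p_{k}}\gamma(t)=c_\lambda\bigl(e^{i\lambda\pi/2}t^{\lambda}+e^{-i\lambda\pi/2}t_k^{\lambda}\bigr)+O(t_k^{2\lambda}),$$
the bracket having real part $c_\lambda\cos(\lambda\pi/2)(t^\lambda+t_k^\lambda)\asymp t_k^\lambda$, hence $|1-\overline{p_{k}}\gamma(t)|\asymp t_k^\lambda$. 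Substituting gives a positive lower bound $c_0=c_0(\lambda)>0$, so the single $k$-th factor has modulus at most $\chi_0:=\sqrt{1-c_0}<1$.

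The main obstacle is guaranteeing that the nonvanishing lower bound on the real part of $1-\overline{p_{k}}\gamma(t)$ is uniform: this rests on the fact that the two vectors $e^{i\lambda\pi/2}t^\lambda$ and $e^{-i\lambda\pi/2}t_k^\lambda$ have arguments strictly inside $(-\pi/2,\pi/2)$ (thanks to $\lambda<1$), so cancellation in the leading order is impossible, with a universal loss factor $\cos(\lambda\pi/2)>0$. Two boundary issues must be addressed: the edge interval $t\in[t_2,t_1]$ with $t_1=\pi/2$, where the linearization around $1$ degrades, and the finitely many small indices $k$ where the $O(t_k^{2\lambda})$ remainder is not negligible. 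In both cases the situation reduces to a single Blaschke factor evaluated at $\gamma(t)$ on a compact subset of $\D$ bounded away from the unit circle, where a direct compactness/continuity argument furnishes the separation from $1$. Taking the smaller of the two constants obtained this way yields the uniform $\chi<1$ depending only on $\lambda$.
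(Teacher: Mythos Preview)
Your proposal is correct and follows the standard route. Note that the paper does not actually supply a proof of this lemma: it is stated as a recall of Lemma~2.6 in \cite{LefevreLiQueffelecRodriguez-Piazza:2012} (with an obvious modification to accommodate the doubled factors and the second contact point). Your argument --- pick the index $k$ with $t_k\leq t\leq t_{k-1}$, use the identity $1-\rho(z,a)^2=\frac{(1-|z|^2)(1-|a|^2)}{|1-\bar a z|^2}$, and read off from the local expansion of $\gamma$ near~$1$ that numerator and denominator are both of size $t_k^{2\lambda}$ --- is exactly the mechanism behind that cited lemma. A tiny cosmetic point: in your displayed formula for $1-\overline{p_k}\gamma(t)$ the phases $e^{\pm i\lambda\pi/2}$ attached to $t^\lambda$ and $t_k^\lambda$ are swapped relative to what the expansion $\gamma(s)=1-c_\lambda e^{-i\lambda\pi/2}s^\lambda+O(s^{2\lambda})$ actually gives, but since only the real part $c_\lambda\cos(\lambda\pi/2)(t^\lambda+t_k^\lambda)$ enters the estimate this is immaterial. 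Your handling of the boundary cases (finitely many small $k$, and the edge interval near $t_1=\pi/2$) by compactness --- both $\gamma(t)$ and $p_k$ lying in a fixed compact subset of $\D$ depending only on $\lambda$ --- is the correct way to close the argument and secure a $\chi$ depending only on $\lambda$.
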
 
We can now finish the estimate (note that $t_1=\pi/2$):
$$h^{-1}I_{1}(h)\lesssim h^{-1}\int_{0}^{t_{[\log N]}} |w_{1}(\gamma(t))|^2dt+h^{-1} \int_{t_{[\log N]}}^{t_1}|B_{1}(\gamma(t))|^2dt.$$
The first term is estimated as in Case 1. And since $q\leq \log N$, the second sum is dominated by 
$2^{q/\lambda} \chi^{2N}\lesssim N^{1/\lambda}\exp(-CN)\lesssim \exp(-C'N)$. 
Putting both cases together, we get 
$\sup_{0<h<1}h^{-1}I_{1}(h)\lesssim \exp(-CN)$ and finally, with $M=[4N\log N+1]$ and using once more (\ref{gelf}): 
$$a_{M}^{2}\lesssim e^{-CN}.$$
 Inverting, interpolating, and using that $(a_p)_{p\geq 1}$ is non-increasing, we finally get for all positive integers $n$:
$$a_n\lesssim e^{-C\frac{n}{\log n}}$$as claimed. This ends the proof of the upper bound.\\

  To establish the lower bound, we first observe that 
$$w_{0}(z)=\exp(-\gamma_{\lambda}g(z))\exp(-\gamma_{\lambda}g(-z)).$$
 Since $\gamma_\lambda \gamma_\lambda=\gamma_{\lambda^2}$, we get  $\gamma_{\lambda}g\varphi=\gamma_{\lambda^2}\,g$, and an explicit formula for $w$ is 
\begin{equation}\label{expl} w(z)=\exp\Big[-\Big(\frac{1+z}{1-z}\Big)^{\lambda^2}\Big] \exp\Big[-\Big(\frac{1-z}{1+z}\Big)^{\lambda^2}\Big]=:w'_{1}(z)w'_{-1}(z). \end{equation}
We now apply Lemma \ref{berber}. To that effect, we must  make a good choice of the $u_j$'s. As in \cite{LiQueffelecRodriguez-Piazza:2012} for lens maps, we choose $u_j=1-e^{-j\eps}$ where $\eps>0$ has to be adjusted, and $v_j=\varphi(u_j)$. We know from \cite[Lemma~6.5]{LiQueffelecRodriguez-Piazza:2012_3} that $I_v\leq \exp(C/\eps)$, and we have $\sqrt{1-u_{j}^{2}}\geq c\,e^{-n\eps}$. Moreover, since
$$\Big(\frac{1+u_j}{1-u_j}\Big)^{\lambda^2}\leq  \Big(\frac{2}{1-u_j}\Big)^{\lambda^2},$$ we see that
$$\inf_{1\leq j\leq n} |w'_{1}(u_j)|\geq \exp(-Ce^{n\varepsilon}).$$

 \noindent And clearly $\inf_{1\leq j\leq n} |w'_{-1}(u_j)|\geq e^{-1}$. So that $\inf_{1\leq j\leq n} |w(u_j)|\geq \exp(-Ce^{n\eps})$ (recall that $w=w'_{1}w'_{-1}$). Lemma \ref{berber} now gives us  
$$a_{n}(T)\gtrsim \exp\Big[-C\big(e^{n\eps}+n\eps +\frac{1}{\eps}\big)\Big]\gtrsim \exp\Big[-C\big(e^{n\eps} +\frac{1}{\eps}\big)\Big].$$
We finally adjust $\eps=\frac{1}{2}\frac{\log n}{n}$ to get 
$$a_{n}(T)\gtrsim \exp\Big[-C\big(\sqrt{n} +\frac{n}{\log n}\big)\Big]\gtrsim \exp\Big[-C\frac{n}{\log n}\Big].$$
This ends the proof of Theorem  \ref{onemain}. 
\end{proof}

\noindent{\bf Acknowledgements:} G.~Lechner would like to thank O.~Bandtlow for discussions and pointing out the article \cite{LefevreLiQueffelecRodriguez-Piazza:2012}, which initiated this collaboration.\\
 L.~Rodr\'iguez-Piazza was also supported by the research project MTM2015-63699-P (Spanish MINECO and FEDER funds).
\small

\end{document}